\newtheorem{theorem}{Theorem} [section]
\newtheorem{corollary}[theorem]{Corollary}
\newtheorem{definition}[theorem]{Definition}
\newtheorem{lemma}[theorem]{Lemma}
\newtheorem{proposition}[theorem]{Proposition}
\newenvironment{proof}[1][Proof]{\noindent\textbf{#1.} }{\ \rule{0.5em}{0.5em}}
\begin{document}

\author{Vadim E. Levit\\Department of Computer Science and Mathematics\\Ariel University Center of Samaria, ISRAEL\\levitv@ariel.ac.il
\and Eugen Mandrescu\\Department of Computer Science\\Holon Institute of Technology, ISRAEL\\eugen\_m@hit.ac.il}
\date{} 
\title{Interval greedoids and families of local maximum stable sets of graphs}
\maketitle

\begin{abstract}
A \textit{maximum stable set }in a graph $G$ is a stable set of maximum
cardinality. $S$ is a \textit{local maximum stable set} of $G$, and we write
$S\in\Psi(G)$, if $S$ is a maximum stable set of the subgraph induced by
$S\cup N(S)$, where $N(S)$ is the neighborhood of $S$.

Nemhauser and Trotter
Jr. \cite{NemhTro}, proved that any $S\in\Psi(G)$ is a subset of a maximum
stable set of $G$. In \cite{LevMan2} we have shown that the family $\Psi(T)$
of a forest $T$ forms a greedoid on its vertex set. The cases where $G$ is
bipartite, triangle-free, well-covered, while $\Psi(G)$ is a greedoid, were
analyzed in \cite{LevMan45}, \cite{LevMan07}, \cite{LevMan08}, respectively.

In this paper we demonstrate that if the family $\Psi(G)$ of the graph $G$
satisfies the accessibility property, then $\Psi(G)$ forms an interval
greedoid on its vertex set. We also characterize those graphs whose families of local maximum stable sets are either antimatroids or matroids.

\textbf{Keywords:} tree, bipartite graph, triangle-free graph,
K\"{o}nig-Egerv\'{a}ry graph, well-covered graph, simplicial graph, matroid, antimatroid.

\end{abstract}

\section{Introduction}

Throughout this paper $G=(V,E)$ is a simple (i.e., a finite, undirected,
loopless and without multiple edges) graph with vertex set $V=V(G)$ and edge
set $E=E(G)$.

If $X\subset V$, then $G[X]$ is the subgraph of $G$ spanned by
$X$. $K_{n},C_{n},P_{n}$ denote respectively, the complete graph on $n\geq1$
vertices, the chordless cycle on $n\geq3$ vertices, and the chordless path on
$n\geq2$ vertices. The \textit{neighborhood} of a vertex $v\in V$ is the set
$N(v)=\{w:w\in V$ \ \textit{and} $vw\in E\}$. For $A\subset V$, we denote
\[
N_{G}(A)=\{v\in V-A:N(v)\cap A\neq\varnothing\}
\]
and $N_{G}[A]=A\cup N(A)$, or shortly, $N(A)$ and $N[A]$, if no ambiguity.

If $\left\vert N(v)\right\vert =1$, then $v$ is a \textit{pendant vertex} of
$G$; $\mathrm{pend}(G)$ is the set of all pendant vertices of $G$, and by
$\mathrm{isol}(G)$ we mean the set of all isolated vertices of $G$. If $N[v]$
is a clique, i.e., $G[N[v]]$ a complete subgraph in $G$, then $v$ is a
\textit{simplicial vertex} of $G$, and $\mathrm{simp}(G)$ denotes the set
$\{v:v\in V(G)$ \textit{and} $v$ \textit{is simplicial in} $G\}$. A graph $G$
is called \textit{simplicial} if every vertex of $G$ is a simplicial vertex or
is adjacent to a simplicial vertex of $G$. A \textit{simplex} of $G$ is a
maximal clique containing at least a simplicial vertex. The simplicial graphs
were introduced by Cheston \textit{et al.}, in \cite{ChesHaLas}.

\begin{theorem}
\label{th7}\cite{ChesHaLas} If $G$ is a simplicial graph and $Q_{1},...,Q_{s}
$ are its simplices, then
\[
V(G)=V(Q_{1})\cup V(Q_{2})\cup...\cup V(Q_{s})\text{ and }s=\theta
(G)=\alpha(G),
\]
where $\theta(G)$ is the minimum number of cliques that cover $V(G)$.
\end{theorem}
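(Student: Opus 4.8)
The plan is to establish the chain of inequalities $s\le\alpha(G)\le\theta(G)\le s$, which forces all three quantities to coincide, while simultaneously checking that the simplices cover $V(G)$. The covering claim comes first. Take an arbitrary $v\in V(G)$. By the definition of a simplicial graph, either $v$ is simplicial or $v$ has a simplicial neighbor $u$. In the first case $N[v]$ is a clique containing $v$; in the second case $N[u]$ is a clique containing both $u$ and $v$. In either case $v$ lies in a clique that also contains a simplicial vertex, and extending that clique to a maximal one yields a simplex $Q_{i}$ with $v\in V(Q_{i})$. Hence $V(G)=V(Q_{1})\cup\cdots\cup V(Q_{s})$, and in particular the simplices form a clique cover, so $\theta(G)\le s$.

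The central observation I would isolate as a lemma is that \emph{a simplicial vertex $v$ lies in exactly one maximal clique, namely $N[v]$}. Indeed, any clique $K\ni v$ satisfies $K\subseteq N[v]$, since every other vertex of $K$ is adjacent to $v$; and because $N[v]$ is itself a clique, it is the unique maximal clique through $v$. Two consequences follow. First, distinct simplices contain disjoint sets of simplicial vertices, because a shared simplicial vertex would force both simplices to equal its unique maximal clique. Second, if $v_{i},v_{j}$ are simplicial with $v_{i}v_{j}\in E$, then $v_{j}\in N[v_{i}]$, so the simplex of $v_{i}$ is $N[v_{i}]$, a maximal clique containing the simplicial vertex $v_{j}$; by uniqueness this is also the simplex of $v_{j}$, so $v_{i}$ and $v_{j}$ share a simplex.

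Now choose one simplicial vertex $v_{i}$ from each simplex $Q_{i}$. The two consequences above show that the chosen $v_{i}$ are distinct and pairwise non-adjacent, so $\{v_{1},\ldots,v_{s}\}$ is a stable set and $s\le\alpha(G)$. Combining this with the standard bound $\alpha(G)\le\theta(G)$, which holds because each clique of any cover contains at most one vertex of a fixed stable set, and with $\theta(G)\le s$ from the first step, I obtain $s\le\alpha(G)\le\theta(G)\le s$, whence $s=\alpha(G)=\theta(G)$.

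The main obstacle is the uniqueness lemma and its careful deployment: the crux is arguing that simplicial representatives drawn from distinct simplices are automatically pairwise non-adjacent, which is precisely where simpliciality, in the form ``$N[v]$ is the only maximal clique through $v$,'' is indispensable. Once that is in hand, the remainder is routine bookkeeping with the two elementary inequalities.
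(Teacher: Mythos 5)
Your proof is correct. Note that the paper itself offers no proof of this statement --- it is quoted from Cheston, Hare, Hedetniemi and Laskar \cite{ChesHaLas} --- so there is no internal argument to compare against; judged on its own, your argument is sound and complete. The key lemma that a simplicial vertex $v$ lies in the unique maximal clique $N[v]$ is exactly the right pivot: it gives both the disjointness of simplicial vertices across distinct simplices and the non-adjacency of representatives chosen from distinct simplices, and the sandwich $s\le\alpha(G)\le\theta(G)\le s$ then closes the argument. One small remark: in your covering step the extension to a maximal clique is not really needed, since for a simplicial vertex $u$ the set $N[u]$ is already the unique maximal clique through $u$ and hence is itself a simplex; but this does not affect correctness.
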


A \textit{stable} set in $G$ is a set of pairwise non-adjacent vertices. A
stable set of maximum size will be referred to as a \textit{maximum stable
set} of $G$, and the \textit{stability number }of $G$, denoted by $\alpha(G)
$, is the cardinality of a maximum stable set in $G$. Let $\Omega(G)$ stand
for the set of all maximum stable sets of $G$.

The following characterization of a maximum stable set of a graph, due to
Berge, will be used in the sequel.

\begin{theorem}
\cite{Berge}\label{th5} A stable set $S$ belongs to $\Omega(G)$ if and only if
every stable set of $G$, disjoint from $S$, can be matched into $S$.
\end{theorem}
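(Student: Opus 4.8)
The plan is to prove both implications of the equivalence by combining Hall's marriage theorem with a local exchange argument. Throughout, I regard ``$T$ can be matched into $S$'' as the existence of a matching in the bipartite subgraph $H$ of $G$ induced between the disjoint stable sets $T$ and $S$ that saturates every vertex of $T$.

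For the necessity direction, I would assume $S\in\Omega(G)$ and take an arbitrary stable set $T$ disjoint from $S$. Suppose, toward a contradiction, that $T$ cannot be matched into $S$. By the deficiency form of Hall's theorem applied to $H$, there is a subset $A\subseteq T$ with $\left\vert N(A)\cap S\right\vert <\left\vert A\right\vert$. The key step is to form the set $S^{\prime}=\left(S\setminus N(A)\right)\cup A$ and verify that it is stable: the part $S\setminus N(A)$ is stable as a subset of $S$, the part $A$ is stable as a subset of $T$, and no edge joins $A$ to $S\setminus N(A)$ precisely because every neighbor of $A$ lying in $S$ has been deleted. A short cardinality count then gives $\left\vert S^{\prime}\right\vert =\left\vert S\right\vert -\left\vert N(A)\cap S\right\vert +\left\vert A\right\vert >\left\vert S\right\vert$, contradicting the maximality of $S$. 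Hence Hall's condition holds and $T$ is matchable into $S$.

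For the sufficiency direction, I would assume every stable set disjoint from $S$ matches into $S$, fix some $S^{\ast}\in\Omega(G)$, and apply the hypothesis to the stable set $T=S^{\ast}\setminus S$, which is disjoint from $S$. Let $M$ be a matching saturating $T$, and let $M(T)\subseteq S$ be the set of $S$-endpoints of the edges of $M$, so that $\left\vert M(T)\right\vert =\left\vert T\right\vert$. The crucial observation is that $M(T)$ is disjoint from $S^{\ast}$: each vertex of $M(T)$ is adjacent to some vertex of $T\subseteq S^{\ast}$, hence it cannot itself lie in the stable set $S^{\ast}$. This yields $M(T)\subseteq S\setminus S^{\ast}$, whence $\left\vert S\setminus S^{\ast}\right\vert \geq\left\vert S^{\ast}\setminus S\right\vert$, and adding $\left\vert S\cap S^{\ast}\right\vert$ to both sides gives $\left\vert S\right\vert \geq\left\vert S^{\ast}\right\vert =\alpha(G)$. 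Since $S$ is stable, this forces $\left\vert S\right\vert =\alpha(G)$, that is, $S\in\Omega(G)$.

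I expect the main obstacle to be the necessity direction, specifically the correct choice of the exchanged set $S^{\prime}$ and the verification of its stability; once Hall's theorem is invoked in its deficiency form, the remainder is bookkeeping. The sufficiency direction is essentially a counting argument built on the single observation that matched $S$-vertices must avoid $S^{\ast}$, and should present no serious difficulty.
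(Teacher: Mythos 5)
Your proof is correct. The paper states this result as a citation to Berge and gives no proof of its own, so there is nothing to compare against; your argument --- Hall's theorem in deficiency form plus the exchange set $S'=(S\setminus N(A))\cup A$ for necessity, and the counting argument via the matched $S$-endpoints avoiding $S^{\ast}$ for sufficiency --- is the standard, complete proof of Berge's criterion.
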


A set $A\subseteq V(G)$ is a \textit{local maximum stable set} of $G$ if
$A\in\Omega(G[N[A]])$, \cite{LevMan2}; by $\Psi(G)$ we denote the set of all
local maximum stable sets of the graph $G$. For instance, any stable set
$S\subseteq\mathrm{simp}(G)$ belongs to $\Psi(G)$, while the converse is not
generally true; e.g., $\{a\},\{e,d\}\in\Psi(G)$ and $\{e,d\}\cap
\mathrm{simp}(G)=\varnothing$, where $G$ is the graph in Figure \ref{fig10}.

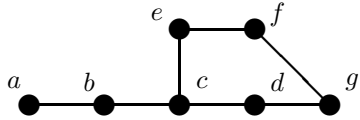
\begin{figure}[h]
\setlength{\unitlength}{1.0cm} \begin{picture}(5,1.5)\thicklines
\multiput(4.5,0)(1,0){5}{\circle*{0.29}}
\multiput(6.5,1)(1,0){2}{\circle*{0.29}}
\put(4.5,0){\line(1,0){4}}
\put(6.5,1){\line(1,0){1}}
\put(6.5,0){\line(0,1){1}}
\put(7.5,1){\line(1,-1){1}}
\put(4.3,0.3){\makebox(0,0){$a$}}
\put(5.3,0.3){\makebox(0,0){$b$}}
\put(6.8,0.3){\makebox(0,0){$c$}}
\put(7.8,0.3){\makebox(0,0){$d$}}
\put(6.2,1.2){\makebox(0,0){$e$}}
\put(7.8,1.2){\makebox(0,0){$f$}}
\put(8.8,0.3){\makebox(0,0){$g$}}
\end{picture}
\caption{A graph {with diverse local maximum stable sets}.}
\label{fig10}
\end{figure}

The following theorem concerning maximum stable sets in general graphs, due to
Nemhauser and Trotter Jr. \cite{NemhTro}, shows that for a special subgraph
$H$ of a graph $G$, some maximum stable set of $H$ can be enlarged to a
maximum stable set of $G$.

\begin{theorem}
\cite{NemhTro}\label{th1} Every local maximum stable set of a graph is a
subset of a maximum stable set.
\end{theorem}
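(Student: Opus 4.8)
The plan is to use the classical ``exchange'' argument that turns an arbitrary maximum stable set into one containing $S$, with the only real input being the local maximality of $S$ inside $G[N[S]]$. Fix $S\in\Psi(G)$, so that by definition $S\in\Omega(G[N[S]])$, and let $S^{*}$ be any maximum stable set of $G$, i.e. $\left\vert S^{*}\right\vert =\alpha(G)$. The idea is to keep $S$ intact and replace the portion of $S^{*}$ that falls inside $N[S]$; the vertices of $S^{*}$ lying outside $N[S]$ cannot conflict with $S$, so they may be safely retained.

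Concretely, I would set $W=S\cup(S^{*}\setminus N[S])$ and first check that $W$ is stable. Indeed $S$ is stable, $S^{*}\setminus N[S]$ is stable as a subset of the stable set $S^{*}$, and there are no edges between the two parts: any $v\in S^{*}\setminus N[S]$ satisfies $v\notin N[S]=S\cup N(S)$, hence $v$ has no neighbor in $S$. This is exactly the step where the definition $N[S]=S\cup N(S)$ does its work.

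Next I would count cardinalities. Since $S\subseteq N[S]$, the union defining $W$ is disjoint, so $\left\vert W\right\vert =\left\vert S\right\vert +\left\vert S^{*}\setminus N[S]\right\vert$, while $\left\vert S^{*}\right\vert =\left\vert S^{*}\cap N[S]\right\vert +\left\vert S^{*}\setminus N[S]\right\vert$. Subtracting, $\left\vert W\right\vert \geq\left\vert S^{*}\right\vert$ is equivalent to $\left\vert S\right\vert \geq\left\vert S^{*}\cap N[S]\right\vert$. But $S^{*}\cap N[S]$ is a stable set of $G$ contained in $N[S]$, hence a stable set of the induced subgraph $G[N[S]]$; since $S\in\Omega(G[N[S]])$ is a maximum stable set of that subgraph, the inequality $\left\vert S\right\vert \geq\left\vert S^{*}\cap N[S]\right\vert$ holds. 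Therefore $\left\vert W\right\vert \geq\left\vert S^{*}\right\vert =\alpha(G)$, and as $W$ is stable we get $\left\vert W\right\vert =\alpha(G)$, so $W\in\Omega(G)$ with $S\subseteq W$, which is the claim.

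I expect no serious obstacle here: the entire content is concentrated in the single inequality $\left\vert S\right\vert \geq\left\vert S^{*}\cap N[S]\right\vert$, which is precisely the translation of the hypothesis $S\in\Omega(G[N[S]])$, while the stability of $W$ and the bookkeeping of sizes are routine. As an alternative route one could instead invoke Berge's characterization (Theorem \ref{th5}) by showing that every stable set disjoint from $S$ can be matched into $S$, but the direct exchange argument above seems shorter and keeps the use of local maximality completely transparent.
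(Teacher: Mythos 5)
Your argument is correct and complete: the set $W=S\cup(S^{*}\setminus N[S])$ is stable because no vertex outside $N[S]=S\cup N(S)$ can neighbour $S$, and the inequality $\left\vert S\right\vert \geq\left\vert S^{*}\cap N[S]\right\vert$ follows exactly from $S\in\Omega(G[N[S]])$, giving $\left\vert W\right\vert\geq\alpha(G)$. The paper states this theorem as a citation to Nemhauser and Trotter without reproducing a proof, and your exchange argument is precisely the standard one for this result, so there is nothing to correct or compare further.
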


Let us notice that the converse of Theorem \ref{th1} is not generally true.
For instance, $C_{n}$ has no proper local maximum stable set, for any $n\geq
4$. The graph $G$ in Figure \ref{fig10} shows another counterexample: any
$S\in\Omega(G)$ contains some local maximum stable set, but these local
maximum stable sets are of different cardinalities. As examples,
$\{a,d,f\}\in\Omega(G)$ and $\{a\},\{d,f\}\in\Psi(G)$, while for
$\{b,e,g\}\in\Omega(G)$ only $\{e,g\}\in\Psi(G)$.

\begin{definition}
\cite{BjZiegler}, \cite{KorLovSch} A \textit{greedoid} is a pair
$(V,\mathcal{F})$, where $\mathcal{F}\subseteq2^{V}$ is a non-empty set system
satisfying the following conditions:

\emph{Accessibility:} for every non-empty $X\in\mathcal{F}$ there is an $x\in
X$ such that $X-\{x\}\in\mathcal{F}$;

\emph{Exchange:} for $X,Y\in\mathcal{F},\left\vert X\right\vert =\left\vert
Y\right\vert +1$, there is an $x\in X-Y$ such that $Y\cup\{x\}\in\mathcal{F}$.
\end{definition}

It is worth observing that if $(V,\mathcal{F})$ has the accessibility property
and $S\in\mathcal{F}$, $\left\vert S\right\vert =k\geq2$, then there is a
chain
\[
\{x_{1}\}\subset\{x_{1},x_{2}\}\subset...\subset\{x_{1},...,x_{k-1}%
\}\subset\{x_{1},...,x_{k-1},x_{k}\}=S
\]
such that $\{x_{1},x_{2},...,x_{j}\}\in\mathcal{F}$, for all $j\in
\{1,...,k-1\}$. Such a chain we call an \textit{accessibility chain }of $S$.

In the sequel we use $\mathcal{F}$ instead of $(V,\mathcal{F})$, as the ground
set $V$ will be, usually, the vertex set of some graph.

\begin{theorem}
\label{th2}\cite{LevMan2} The family of local maximum stable sets of a forest
forms a greedoid on its vertex set.
\end{theorem}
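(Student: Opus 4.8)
The plan is to verify the two greedoid axioms for $\mathcal{F}=\Psi(T)$ directly. Non-emptiness is immediate, since $\varnothing\in\Psi(T)$. The backbone of everything is the following matching characterization, which I would establish first: for a forest $T$ and a stable set $S$, one has $S\in\Psi(T)$ if and only if the neighborhood $N(S)$ can be matched into $S$, i.e.\ there is a matching of $T$ that saturates $N(S)$ and each of whose edges joins a vertex of $N(S)$ to a vertex of $S$. The ``if'' direction is a direct application of Berge's criterion (Theorem \ref{th5}): any stable set of $T[N[S]]$ disjoint from $S$ lies inside $N(S)$, hence is matched into $S$ by restricting the given matching. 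For the ``only if'' direction I would use that a forest is a K\"{o}nig--Egerv\'{a}ry graph, so $\alpha(T[N[S]])+\mu(T[N[S]])=|N[S]|$ forces the maximum matching to have size $|N(S)|$, and then translate this, via Hall's condition, into a matching of $N(S)$ into $S$; the forest structure (no triangles, and no alternating cycles) is what keeps the deficiency version of Hall under control.

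Granting the characterization, accessibility is clean. Let $\varnothing\neq S\in\Psi(T)$ and fix a matching $M\colon N(S)\to S$. If $M$ is not onto, pick an unmatched vertex $x\in S$: then $N(S-\{x\})\subseteq N(S)$ and the restriction of $M$ still matches $N(S-\{x\})$ into $S-\{x\}$, so $S-\{x\}\in\Psi(T)$. If $M$ is onto, then $|N(S)|=|S|$, and a forest edge count gives the step: were every vertex of $N(S)$ adjacent to at least two vertices of $S$, the number of $S$--$N(S)$ edges would be at least $2|N(S)|=|S|+|N(S)|$, exceeding the number $|S|+|N(S)|-1$ of edges available in the forest $T[N[S]]$. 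Hence some $w\in N(S)$ has a unique neighbor $x\in S$; removing $x$ drops exactly $w$ from $N(S)$, and $M$ restricted to $N(S)\setminus\{w\}=N(S-\{x\})$ matches it into $S-\{x\}$, so again $S-\{x\}\in\Psi(T)$.

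For the exchange property I would induct on $|V(T)|$, the base case (an edgeless forest, where $\Psi(T)=2^{V}$) being trivial. Given $X,Y\in\Psi(T)$ with $|X|=|Y|+1$, choose a pendant vertex $v$ with neighbor $u$ and split on how $v$ meets $X$ and $Y$, using the reduction $S\in\Psi(T)\iff S-\{v\}\in\Psi(T-\{u,v\})$, valid when $v\in S$ (then $u\notin S$, and deleting the leaf $v$ together with $u$ lowers the stability number of the induced subgraph by exactly one), together with $S\in\Psi(T)\iff S\in\Psi(T-v)$, valid when $v\notin S$ and $u\notin S$. When $v$ lies in both of $X,Y$, the first reduction pushes the pair into $\Psi(T-\{u,v\})$ with the size gap preserved, the inductive exchange supplies $x\in X-Y$, and the reduction pulls $Y\cup\{x\}$ back; when $v$ lies in neither and $u$ in neither, the second reduction does the same through $\Psi(T-v)$.

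The main obstacle is the asymmetric case, where $v$ belongs to exactly one of $X,Y$, or where $v\notin S$ but the neighbor $u$ lies in $S$: here deletion of $v$ can spuriously inflate the stability number (already for $S=\{u\}$ in $P_{3}$), so the two hypotheses on $X$ and $Y$ land in different reduced forests and cannot be compared term by term. This is exactly the step I expect to be delicate. My intended resolution is to abandon the purely local reduction there and argue through the matching characterization directly: starting from the matching of $N(Y)$ into $Y$ and the larger matching of $N(X)$ into $X$, I would chase an alternating path between the two matchings to locate a vertex $x\in X-Y$ non-adjacent to $Y$ whose addition preserves the Hall condition for $Y\cup\{x\}$. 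Making this alternating-path argument yield an element of $X-Y$ (rather than merely some admissible vertex) is the crux of the proof.
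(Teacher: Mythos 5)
The paper states Theorem \ref{th2} as a quotation from \cite{LevMan2} and does not reprove it here, so the comparison has to be with the machinery this paper does supply. Your first two steps are essentially sound. The matching characterization ($S\in\Psi(T)$ iff $N(S)$ can be matched into $S$) is correct for forests; for the ``only if'' direction you do not really need a deficiency form of Hall: $T[N[S]]$ is bipartite, hence K\"{o}nig--Egerv\'{a}ry, so $\mu(T[N[S]])=|N[S]|-|S|=|N(S)|$, and since $S$ is stable each of the $|N(S)|$ matching edges meets $N(S)$, hence meets it exactly once, so the matching pairs $N(S)$ with a subset of $S$. Your accessibility argument on top of this is correct, up to one harmless imprecision: deleting the vertex $x$ in your second case removes from $N(S)$ not just $w$ but every vertex whose unique neighbour in $S$ is $x$; the restriction of $M$ still matches $N(S-\{x\})$ into $S-\{x\}$, so the conclusion stands.

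The genuine gap is the exchange property. Your induction breaks exactly where you say it does, and the proposed repair --- chasing an alternating path between the two matchings to produce a suitable vertex of $X-Y$ --- is announced as ``the crux'' but never carried out; as written, the proposal establishes accessibility and nothing more. The point you are missing is that for set systems of the form $\Psi(G)$ the exchange property never has to be proved separately: Theorem \ref{th4}(i) of this paper shows, for an arbitrary graph $G$, that accessibility of $\Psi(G)$ already implies exchange (the argument walks an accessibility chain of the larger set into $N[A]$ and applies Berge's criterion, Theorem \ref{th5}, to the first vertex of the chain that escapes $N[A]$). So the correct completion of your proof is: establish accessibility for forests exactly as you did, then invoke that general accessibility-implies-exchange mechanism instead of fighting the leaf-deletion induction. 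If you insist on a self-contained forest argument, the alternating-path step can be made to work --- it is essentially the uniquely restricted matching argument behind Theorem \ref{th22}, and a forest has only uniquely restricted maximum matchings --- but it still has to be written down; in its current state it is a plan, not a proof.
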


Theorem \ref{th2} is not specific for forests. For instance, the family
$\Psi(G)$ of the graph $G$ in Figure \ref{Fig101} is a greedoid.

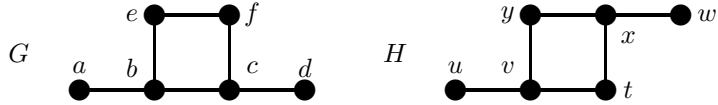
\begin{figure}[h]
\setlength{\unitlength}{1.0cm} \begin{picture}(5,1.5)\thicklines
\multiput(3,0)(1,0){4}{\circle*{0.29}}
\multiput(4,1)(1,0){2}{\circle*{0.29}}
\put(3,0){\line(1,0){3}}
\put(4,1){\line(1,0){1}}
\multiput(4,0)(1,0){2}{\line(0,1){1}}
\put(3,0.3){\makebox(0,0){$a$}}
\put(3.7,0.3){\makebox(0,0){$b$}}
\put(5.3,0.3){\makebox(0,0){$c$}}
\put(6,0.3){\makebox(0,0){$d$}}
\put(3.7,1){\makebox(0,0){$e$}}
\put(5.3,1){\makebox(0,0){$f$}}
\put(2.2,0.5){\makebox(0,0){$G$}}
\multiput(8,0)(1,0){3}{\circle*{0.29}}
\multiput(9,1)(1,0){3}{\circle*{0.29}}
\put(8,0){\line(1,0){2}}
\put(9,1){\line(1,0){2}}
\multiput(9,0)(1,0){2}{\line(0,1){1}}
\put(8,0.3){\makebox(0,0){$u$}}
\put(8.7,0.3){\makebox(0,0){$v$}}
\put(10.3,0){\makebox(0,0){$t$}}
\put(11.35,1){\makebox(0,0){$w$}}
\put(8.7,1){\makebox(0,0){$y$}}
\put(10.3,0.7){\makebox(0,0){$x$}}
\put(7.2,0.5){\makebox(0,0){$H$}}
\end{picture}
\caption{Both $G$ and $H$ are bipartite, but only $\Psi(G)$ forms {a greedoid}.}
\label{Fig101}
\end{figure}

Notice that $\Psi(H)$ is not a greedoid, where $H$ is from Figure
\ref{Fig101}, because the accessibility property is not satisfied, e.g.,
$\{y,t\}\in\Psi(H)$, but $\{y\},\{t\}$ $\notin\Psi(H)$.

A \textit{matching} in a graph $G=(V,E)$ is a set of edges $M\subseteq E$ such
that no two edges of $M$ share a common vertex. A \textit{maximum matching} is
a matching of maximum size, denoted by $\mu(G)$. A matching is \textit{perfect} if it saturates all the vertices of the graph. A matching
\[
M=\{a_{i}b_{i}:a_{i},b_{i}\in V(G),1\leq i\leq k\}
\]
of a graph $G$ is called \textit{a uniquely restricted matching} if $M$ is the unique perfect matching of $G[\{a_{i},b_{i}:1\leq i\leq k\}]$, \cite{GolHiLew}. For instance, all the
maximum matchings of the graph $G$ in Figure \ref{Fig101} are uniquely
restricted, while the graph $H$ from the same figure has both uniquely
restricted maximum matchings (e.g., $\{uv,xw\}$) and non-uniquely restricted
maximum matchings (e.g., $\{xy,tv\}$). It turns out that this is the reason
that $\Psi(H)$ is not a greedoid, while $\Psi(G)$ is a greedoid.

\begin{theorem}
\label{th22}\cite{LevMan45} For a bipartite graph $G,$ $\Psi(G)$ is a greedoid
on its vertex set if and only if all its maximum matchings are uniquely restricted.
\end{theorem}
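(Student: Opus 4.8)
The plan is to treat the two implications separately, and in both to make the \emph{accessibility} property the pivot, since the graph $H$ of Figure~\ref{Fig101} indicates that this is the only greedoid axiom genuinely at stake here. The tool underlying everything is the following reformulation, available because every induced subgraph of a bipartite graph is a K\"{o}nig--Egerv\'{a}ry graph: for a stable set $S$ one has $S\in\Psi(G)$ if and only if $N(S)$ can be matched into $S$, i.e. $G[N[S]]$ admits a matching $M_{S}$ saturating every vertex of $N(S)$ by an edge joining it to $S$. Indeed $N(S)$ is a vertex cover of $G[N[S]]$, so $\mu(G[N[S]])\leq\left\vert N(S)\right\vert$, while $\alpha(G[N[S]])=\left\vert N[S]\right\vert-\mu(G[N[S]])$; hence $S\in\Omega(G[N[S]])$ precisely when $\mu(G[N[S]])=\left\vert N(S)\right\vert$, and a matching of that size is forced (by a count of the cover vertices) to be of the stated crossing type. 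This is the quantitative form of Theorem~\ref{th5} in the bipartite case.

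For $(\Leftarrow)$ I assume every maximum matching of $G$ is uniquely restricted and fix $S\in\Psi(G)$ with $\left\vert S\right\vert\geq1$ and $M_{S}$ as above. If $M_{S}$ leaves some $s\in S$ unsaturated, then $M_{S}$ still saturates $N(S-\{s\})$ into $S-\{s\}$, so $S-\{s\}\in\Psi(G)$. Otherwise $M_{S}$ is a perfect matching of $G[N[S]]$ and $\left\vert S\right\vert=\left\vert N(S)\right\vert$; extending $M_{S}$ greedily to a maximum matching $M$ of $G$ and noting that $M$ agrees with $M_{S}$ on the $M_{S}$-saturated set $N[S]$, any $M_{S}$-alternating cycle in $G[N[S]]$ would be an $M$-alternating cycle, contradicting that $M$ is uniquely restricted. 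Thus $M_{S}$ is the \emph{unique} perfect matching of the bipartite graph $B$ with parts $S$ and $N(S)$ formed by the $S$--$N(S)$ edges, so $B$ has a triangular vertex ordering and therefore a vertex $w\in N(S)$ whose only neighbour in $S$ is its mate $s_{0}$. Deleting $s_{0}$ expels $w$ from the neighbourhood and $M_{S}-\{s_{0}w\}$ saturates $N(S-\{s_{0}\})\subseteq N(S)-\{w\}$ into $S-\{s_{0}\}$, so $S-\{s_{0}\}\in\Psi(G)$: accessibility holds. The exchange axiom I intend to obtain separately, by augmenting $M_{Y}$ along an $M_{Y}$-alternating path supplied by a vertex of $X-Y$, the existence of a usable vertex following from $\left\vert X\right\vert>\left\vert Y\right\vert$; this step is routine compared with accessibility.

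For $(\Rightarrow)$ I argue the contrapositive: if some maximum matching $M$ is not uniquely restricted I produce an $S\in\Psi(G)$ at which accessibility fails. Then $G[V(M)]$ carries a second perfect matching, so its symmetric difference with $M$ contains an $M$-alternating cycle; taking one of minimum length makes it an induced $C_{2r}$, with colour classes $X$ and $Y$. Fixing $S^{*}\in\Omega(G)$, the cover $V-S^{*}$ is saturated by $M$ and each edge of $M$ meets $S^{*}$ exactly once, so the cycle vertices in $S^{*}$ are exactly one colour class, say $X$, and $Y\subseteq N(X)$. Provided $N(X)=Y$ we get $G[N[X]]=C_{2r}$, in which $X$ is a maximum stable set, so $X\in\Psi(G)$; but deleting any single $x\in X$ turns $C_{2r}$ into a chordless path on $2r-1$ vertices, whose independence number is still $r>r-1=\left\vert X-\{x\}\right\vert$, so $X-\{x\}\notin\Psi(G)$ for every $x$. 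Hence $X$ witnesses the failure of accessibility and $\Psi(G)$ is not a greedoid.

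The hard part is precisely the proviso $N(X)=Y$ in the last paragraph: a priori a vertex $x\in X\subseteq S^{*}$ may have a cover-neighbour lying off the cycle, and then $X$ need not be maximum inside $G[N[X]]$. The decisive step is thus to select the alternating cycle so that its $S^{*}$-side is \emph{closed}, its neighbourhood confined to the cycle; I expect to secure this by locating the cycle within a single cyclic block of the matching (a Dulmage--Mendelsohn-type core), where alternating reachability keeps neighbourhoods inside the block and the odd-path argument applies verbatim. The other delicate point is the passage ``unique perfect matching $\Rightarrow$ a pendant vertex of $N(S)$'' in $(\Leftarrow)$, which rests on the triangular (Kotzig) structure of graphs with a unique perfect matching; the unsaturated-vertex case and the exchange axiom are comparatively mechanical.
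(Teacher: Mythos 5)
The paper does not prove Theorem \ref{th22} --- it is imported from \cite{LevMan45} --- so your proposal has to stand on its own. The preliminary reformulation ($S\in\Psi(G)$ iff $N(S)$ can be matched into $S$, via K\"{o}nig's theorem) is correct, and the accessibility half of $(\Leftarrow)$ is essentially right: the unsaturated-vertex case works, and the fact you need in the other case --- that $M_{S}$ extends to a maximum matching of $G$ --- is true, although not by ``greedy extension''; it follows from the cover count $\tau(G)\leq\left\vert N(S)\right\vert +\tau(G-N[S])$, which yields $\mu(G)=\mu(G[N[S]])+\mu(G-N[S])$, so that $M_{S}\cup M'$ is maximum for every maximum matching $M'$ of $G-N[S]$. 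What you cannot do is dismiss the exchange axiom as ``routine'': proving that accessibility of $\Psi(G)$ implies exchange is precisely the main theorem of the present paper (Theorem \ref{th4}), and its proof is not mechanical. Either invoke Theorem \ref{th4} to finish this direction, or write out your augmenting-path argument in full.

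The serious gap is in $(\Rightarrow)$, at exactly the point you flag, and your proposed repair does not close it. The colour class $X=S^{*}\cap V(C)$ of a minimum induced $M$-alternating cycle need not satisfy $N(X)=Y$, and when it does not, $X$ may fail to lie in $\Psi(G)$ at all, so it cannot witness the failure of accessibility. Concretely, take the cycle $x_{1}y_{1}x_{2}y_{2}$ and attach pendant paths $x_{1}-z-p$ and $y_{1}-z'-p'$. The perfect matching $\{x_{1}y_{1},x_{2}y_{2},zp,z'p'\}$ is maximum and not uniquely restricted, and the only induced alternating cycle is this $C_{4}$; yet $\{z,y_{1},y_{2}\}$ is stable in $G[N[\{x_{1},x_{2}\}]]$ and $\{z',x_{1},x_{2}\}$ is stable in $G[N[\{y_{1},y_{2}\}]]$, so \emph{neither} colour class belongs to $\Psi(G)$. (Accessibility does fail, but at $\{y_{1},y_{2},p'\}$, a set not contained in the cycle.) The Dulmage--Mendelsohn idea does not rescue this: the cyclic block here is exactly $V(C_{4})$, and it is not closed under taking neighbourhoods, since $x_{1}$ and $y_{1}$ have neighbours outside it. So the witness to non-accessibility must in general be assembled from a colour class together with vertices off the cycle, and constructing it is the real content of the converse direction, which your plan leaves open.
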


The case of bipartite graphs owning a unique cycle, whose family of local
maximum stable sets forms a greedoid is analyzed in \cite{LevMan5}.

Let us recall that $G$ is a \textit{K\"{o}nig-Egerv\'{a}ry graph} provided
$\alpha(G)+\mu(G)=\left\vert V(G)\right\vert $, \cite{Dem}, \cite{Ster}. As a
well-known example, any bipartite graph is a K\"{o}nig-Egerv\'{a}ry graph,
\cite{Eger}, \cite{Koen}.

\begin{figure}[h]
\setlength{\unitlength}{1.0cm} \begin{picture}(5,1.5)\thicklines
\multiput(3,0)(1,0){4}{\circle*{0.29}}
\multiput(3,1)(1,0){3}{\circle*{0.29}}
\put(3,0){\line(1,0){3}}
\put(3,1){\line(1,0){1}}
\put(4,1){\line(1,-1){1}}
\put(3,0){\line(0,1){1}}
\put(5,0){\line(0,1){1}}
\put(5.3,1){\makebox(0,0){$f$}}
\put(2.7,0.3){\makebox(0,0){$a$}}
\put(2.7,1){\makebox(0,0){$b$}}
\put(4.3,0.3){\makebox(0,0){$c$}}
\put(4.3,1){\makebox(0,0){$d$}}
\put(5.3,0.3){\makebox(0,0){$e$}}
\put(6.3,0.3){\makebox(0,0){$g$}}
\put(2,0.5){\makebox(0,0){$G$}}
\multiput(8.5,0)(1,0){4}{\circle*{0.29}}
\multiput(8.5,1)(1,0){4}{\circle*{0.29}}
\put(8.5,0){\line(1,0){3}}
\put(8.5,0){\line(1,1){1}}
\put(8.5,1){\line(1,0){3}}
\put(10.5,0){\line(0,1){1}}
\put(7.5,0.5){\makebox(0,0){$H$}}
\end{picture}
\caption{$\Psi(G)$ is not a greedoid, $\Psi(H)$ is a greedoid.}
\label{fig2922}
\end{figure}
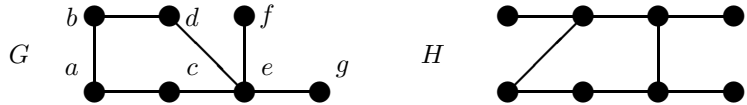

The graphs from Figure \ref{fig2922} are non-bipartite K\"{o}nig-Egerv\'{a}ry
graphs, and all their maximum matchings are uniquely restricted. Let us remark
that both graphs are also triangle-free, but only $\Psi(H)$ is a greedoid. It is clear that $\{b,c\}\in$ $\Psi(G)$, while $G[N[\{b,c\}]]$ is not a K\"{o}nig-Egerv\'{a}ry graph. As one can see from the following theorem, this observation is the real reason for $\Psi(G)$ not to be a greedoid.

\begin{theorem}
\label{th33}\cite{LevMan07} If $G$ is a triangle-free graph, then the
following assertions are equivalent:

\emph{(i)} $\Psi(G)$ is a greedoid;

\emph{(ii)} all maximum matchings of $G$ are uniquely restricted and the
closed neighborhood of every local maximum stable set of $G$ induces a
K\"{o}nig-Egerv\'{a}ry graph.
\end{theorem}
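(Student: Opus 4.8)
The plan is to treat the two implications separately, with a single tool from matching theory doing most of the work. First I record the observation that drives everything: if $H$ is a K\"{o}nig--Egerv\'{a}ry graph and $S\in\Omega(H)$, then $\mu(H)=|V(H)|-\alpha(H)=|V(H)-S|$, and since $S$ is stable no edge of a maximum matching can have both ends in $S$; hence such a matching consists of exactly $|V(H)-S|$ edges joining $V(H)-S$ to $S$. Thus every maximum stable set of a K\"{o}nig--Egerv\'{a}ry graph absorbs a matching that saturates $V(H)-S$ and injects it into $S$. Applied to $H=G[N[S]]$ with $S\in\Psi(G)$, where $\alpha(G[N[S]])=|S|$, this yields a matching $M$ saturating $N(S)$ with $M(N(S))\subseteq S$, so $|N(S)|\leq|S|$. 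Together with Berge's criterion (Theorem \ref{th5}), $M$ is the engine of the accessibility argument.

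For \emph{(ii)} $\Rightarrow$ \emph{(i)} I verify accessibility and exchange. For accessibility, take $\varnothing\neq S\in\Psi(G)$ with $M$ as above. If $|N(S)|<|S|$, pick $x\in S$ unsaturated by $M$; then any stable set of $G[N[S-\{x\}]]$ disjoint from $S-\{x\}$ lies in $N(S)$ and is matched into $S-\{x\}$ by the restriction of $M$ (which avoids $x$), so Theorem \ref{th5} gives $S-\{x\}\in\Psi(G)$. The delicate subcase is $|N(S)|=|S|$, where $M$ is a perfect matching of $G[N[S]]$. Here I first argue that $M$ is the \emph{unique} perfect matching of $G[N[S]]$: a second one would yield an even $M$-alternating cycle, which I must convert into a non-uniquely-restricted maximum matching of $G$, contrary to hypothesis. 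Then, exploiting the structure of graphs with a unique perfect matching together with triangle-freeness (which forces the neighborhood of each vertex to be stable), I locate $x\in S$ with $\alpha(G[N[S]]-\{x\}-P_{x})=|S|-1$, where $P_{x}$ collects the neighbors of $x$ having no other neighbor in $S$; for such $x$, Theorem \ref{th5} again gives $S-\{x\}\in\Psi(G)$. That unique restrictedness is indispensable here is shown by the graph $H$ of Figure \ref{Fig101}: there $\{y,t\}\in\Psi(H)$ with $|N(\{y,t\})|=|\{y,t\}|$ but its matching is not uniquely restricted, and accessibility fails.

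The exchange property is where I expect the main obstacle. Given $X,Y\in\Psi(G)$ with $|X|=|Y|+1$, both lie in maximum stable sets by Theorem \ref{th1}, and I must produce $x\in X-Y$ with $x\notin N(Y)$ and $Y\cup\{x\}\in\Omega(G[N[Y\cup\{x\}]])$. My strategy is to compare the two K\"{o}nig--Egerv\'{a}ry matchings $M_{X}$ of $G[N[X]]$ and $M_{Y}$ of $G[N[Y]]$ and to trace an alternating path in $M_{X}\triangle M_{Y}$ starting at a vertex of $X-Y$; triangle-freeness keeps every relevant odd closed walk of length at least five and so stops the path from short-circuiting, while the local K\"{o}nig--Egerv\'{a}ry hypothesis supplies the matchings needed to carry out the rerouting. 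The bipartite specialization of this scheme, in which the local K\"{o}nig--Egerv\'{a}ry condition is automatic, is exactly Theorem \ref{th22}, which I will use as a template; the genuinely new point is verifying that triangle-freeness alone, rather than bipartiteness, keeps all the alternating structures even and long enough.

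For the converse \emph{(i)} $\Rightarrow$ \emph{(ii)} I argue by contraposition on each condition. If some maximum matching $M$ of $G$ is not uniquely restricted, then $G[V(M)]$ has a second perfect matching, so $M$ differs from it along an even $M$-alternating cycle; rotating a maximum stable set along this cycle is used to produce a local maximum stable set whose neighborhood carries two perfect matchings, breaking accessibility exactly as in the perfect-matching subcase above, and this part parallels the bipartite argument of Theorem \ref{th22}. If instead $G[N[S]]$ fails to be K\"{o}nig--Egerv\'{a}ry for some $S\in\Psi(G)$, I pass to a local maximum stable set $S$ of least cardinality with this property (singletons are safe, since in a triangle-free graph a closed neighborhood that is a clique has at most two vertices and is therefore K\"{o}nig--Egerv\'{a}ry) and show that accessibility must fail at $S$: any predecessor $S-\{x\}\in\Psi(G)$ would, by minimality, have a K\"{o}nig--Egerv\'{a}ry neighborhood, and I claim that in a triangle-free graph adjoining a single vertex to such an $S-\{x\}$ within $\Psi(G)$ cannot destroy the K\"{o}nig--Egerv\'{a}ry property, contradicting the choice of $S$. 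The graph $G$ of Figure \ref{fig2922}, where $\{b,c\}\in\Psi(G)$ and $G[N[\{b,c\}]]=C_{5}$, is the prototype of this failure. The two contrapositives together establish \emph{(i)} $\Rightarrow$ \emph{(ii)} and complete the equivalence.
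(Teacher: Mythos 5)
First, note that the paper does not prove Theorem \ref{th33} at all: it is imported verbatim from \cite{LevMan07}, so there is no in-paper proof to match your argument against. Judged on its own terms, your proposal is an outline rather than a proof, and the steps you leave as declared intentions are precisely the ones that carry the mathematical weight. Concretely: (1) In the accessibility argument, the subcase $|N(S)|=|S|$ is not closed. You assert that the perfect matching $M$ of $G[N[S]]$ is unique because a second one ``must be converted'' into a non-uniquely-restricted maximum matching of $G$ --- but that conversion requires extending $M$ to a maximum matching of all of $G$, which is not automatic and is not argued. You then say you will ``locate'' an $x\in S$ with $\alpha(G[N[S]]-\{x\}-P_{x})=|S|-1$ by ``exploiting the structure of graphs with a unique perfect matching''; no such structure theorem is named or applied, and this existence claim is exactly the crux of the direction. (2) The exchange property is not proved at all: the alternating-path scheme in $M_{X}\triangle M_{Y}$ is described only as a strategy (``where I expect the main obstacle''), with the key assertion --- that triangle-freeness keeps every alternating structure ``even and long enough'' --- left unverified. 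Ironically, the present paper's Theorem \ref{th4} shows that for $\Psi(G)$ of \emph{any} graph, accessibility already implies the exchange property, so this entire block could be replaced by a citation or by reproducing that short argument; your plan instead commits to the hardest possible route and then does not walk it.

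(3) In the direction \emph{(i)}$\Rightarrow$\emph{(ii)}, the second contrapositive rests on the unproven claim that, in a triangle-free graph, if $S-\{x\}\in\Psi(G)$ has a K\"{o}nig--Egerv\'{a}ry closed neighborhood and $S\in\Psi(G)$, then $G[N[S]]$ is also K\"{o}nig--Egerv\'{a}ry. As stated (in a branch where you may not assume unique restrictedness of maximum matchings, since you split the negation of \emph{(ii)} into two independent cases), this is a substantive assertion whose truth is far from evident; it needs either a proof or additional hypotheses threaded through from the greedoid assumption. Similarly, ``rotating a maximum stable set along this cycle is used to produce a local maximum stable set whose neighborhood carries two perfect matchings'' is the entire content of the first contrapositive and is left as a one-line description. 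The preliminary observation about K\"{o}nig--Egerv\'{a}ry graphs (a maximum matching matches $V(H)-S$ into $S$) and the easy subcase $|N(S)|<|S|$ are correct and correctly used, but everything downstream of them is a promissory note rather than an argument.
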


Various cases of well-covered graphs whose families of local maximum stable
sets form greedoids, were treated in \cite{LevMan07Buch}, \cite{LevMan08},
\cite{LevMan08a}, \cite{LevMan08b}.

Let $X$ be a graph with $V(X)=\{v_{i}:1\leq i\leq n\}$, and $\{H_{i}:1\leq
i\leq n\}$ be a family of graphs. Joining each $v_{i}\in V(X)$ to all the
vertices of $H_{i}$, we obtain a new graph, called the \textit{corona} of $X$
and $\{H_{i}:1\leq i\leq n\}$ and denoted by $G=X\circ\{H_{1},H_{2}%
,...,H_{n}\}$. For instance, see Figure \ref{fig12}. If $H_{1}=H_{2}=...=H_{n}=H$, we write $G=X\circ H$, and in this case, $G$ is called the \textit{corona} of $X$ and $H$.

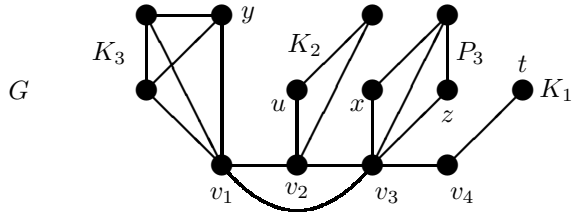
\begin{figure}[h]
\setlength{\unitlength}{1cm}\begin{picture}(5,2.8)\thicklines
\multiput(6,0.5)(1,0){4}{\circle*{0.29}}
\put(6,2.5){\circle*{0.29}}
\multiput(5,1.5)(0,1){2}{\circle*{0.29}}
\multiput(7,1.5)(1,0){4}{\circle*{0.29}}
\multiput(8,2.5)(1,0){2}{\circle*{0.29}}
\multiput(6,0.5)(1,0){3}{\line(1,0){1}}
\multiput(7,0.5)(1,0){2}{\line(0,1){1}}
\multiput(7,0.5)(1,0){2}{\line(1,2){1}}
\multiput(7,1.5)(1,0){2}{\line(1,1){1}}
\multiput(8,0.5)(1,0){2}{\line(1,1){1}}
\put(5,1.5){\line(1,1){1}}
\put(5,1.5){\line(0,1){1}}
\put(5,2.5){\line(1,0){1}}
\put(5,1.5){\line(1,-1){1}}
\put(5,2.5){\line(1,-2){1}}
\put(6,0.5){\line(0,1){2}}
\put(9,1.5){\line(0,1){1}}
\qbezier(6,0.5)(7,-0.7)(8,0.5)
\put(7.8,1.3){\makebox(0,0){$x$}}
\put(6.35,2.5){\makebox(0,0){$y$}}
\put(9,1.15){\makebox(0,0){$z$}}
\put(6.75,1.3){\makebox(0,0){$u$}}
\put(10,1.85){\makebox(0,0){$t$}}
\put(6,0.1){\makebox(0,0){$v_1$}}
\put(7,0.17){\makebox(0,0){$v_2$}}
\put(8.2,0.1){\makebox(0,0){$v_3$}}
\put(9.2,0.1){\makebox(0,0){$v_4$}}
\put(4.5,2){\makebox(0,0){$K_3$}}
\put(7.1,2.1){\makebox(0,0){$K_2$}}
\put(9.3,2){\makebox(0,0){$P_3$}}
\put(10.45,1.5){\makebox(0,0){$K_1$}}
\put(3.3,1.5){\makebox(0,0){$G$}}
\end{picture}
\caption{$G=(G[\{v_{1},v_{2},v_{3},v_{4}\}])\circ\{K_{3},K_{2},P_{3},K_{1}\}$ is a
well-covered graph.}
\label{fig12}
\end{figure}

\begin{theorem}
\label{th333}\cite{LevMan08b} If $G=X\circ\{H_{1},H_{2},...,H_{n}\}$ and
$H_{1},H_{2},...,H_{n}$ are non-empty graphs, then $\Psi(G)$ is a greedoid if
and only if every $\Psi(H_{i}),i=1,2,...,n$, is a greedoid.
\end{theorem}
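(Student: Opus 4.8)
The plan is to prove a blockwise structural lemma describing $\Psi(G)$ and then read off both implications from it. Write any stable set $S$ of $G$ as $S=\{v_i:i\in J\}\cup\bigcup_{i\notin J}S_i$, where $J=\{i:v_i\in S\}$ and $S_i=S\cap V(H_i)$; note first that $\alpha(G)=\sum_{i=1}^{n}\alpha(H_i)$, since the blocks $\{v_i\}\cup V(H_i)$ interact only through the $X$-edges among the $v_i$. The claim I would establish is that $S\in\Psi(G)$ iff (1) $\alpha(H_i)=1$ for every $i\in J$; (2) $S_i\in\Psi(H_i)$ for every $i\notin J$; and (3) no block that is \emph{empty} in $S$ (that is, $v_i\notin S$ and $S_i=\varnothing$) has an $X$-neighbour $v_k$ with $k\in J$. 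Conditions (1) and (2) follow from swap arguments: if $v_i\in S$ with $\alpha(H_i)\ge 2$, replacing $v_i$ by a maximum stable set of $H_i$ produces a larger stable set inside $G[N[S]]$, contradicting $S\in\Omega(G[N[S]])$; and if some $S_i\notin\Psi(H_i)$, replacing it by a witness inside $N_{H_i}[S_i]$ does the same. For (3) I would compute $\alpha(G[N[S]])=|S|+\alpha(X[D])$, where $D$ collects the empty-block vertices $v_i$ lying in $N[S]$: the $H$-parts yield their full value for free, while the only value ``locked behind'' a $v_i$ sits in empty blocks, so $S$ is maximum in $G[N[S]]$ exactly when $D=\varnothing$, which is condition (3).

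Granting the lemma, the necessity direction is immediate. The members $S\in\Psi(G)$ with $S\subseteq V(H_i)$ are precisely those of $\Psi(H_i)$: conditions (1) and (3) are then vacuous and (2) reads $S\in\Psi(H_i)$. This realizes $\Psi(H_i)$ as a ``slice'' of $\Psi(G)$ closed under both the removals supplied by accessibility and the augmentations supplied by exchange, since those operations stay inside $V(H_i)$. Applying accessibility and exchange of $\Psi(G)$ to members of this slice therefore transfers them verbatim to $\Psi(H_i)$, so each $\Psi(H_i)$ is a greedoid.

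For sufficiency I would verify accessibility and exchange for $\Psi(G)$ directly. Accessibility: if $J\neq\varnothing$, remove any $v_i$ with $i\in J$; as $\{v_j:j\in J\}$ is stable in $X$, the newly emptied block $i$ is not adjacent to any remaining selected $v_j$, so (3) survives; if $J=\varnothing$, invoke accessibility of some $\Psi(H_i)$ on a nonempty $S_i$. Exchange: take $A,B\in\Psi(G)$ with $|A|=|B|+1$ and let $a_i,b_i$ be the sizes inside block $i$, so $\sum_i(a_i-b_i)=1$ forces a block of positive surplus. If some block is occupied by $A$ but empty in $B$ and is $H$-occupied, the strong exchange property of $\Psi(H_i)$ (which follows from accessibility plus the basic exchange of the greedoid $\Psi(H_i)$, by first shrinking $A_i$ to size $|B_i|+1$) gives $x\in A_i\setminus B_i$ with $B_i\cup\{x\}\in\Psi(H_i)$, and adjoining $x$ keeps $B\cup\{x\}$ stable and preserves (1)--(3). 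If every such block is instead $v$-occupied, I adjoin the corresponding $v_i\in A$: condition (3) for $B$ forbids any $v_k\in B$ adjacent to $v_i$ (so the result is stable), while any empty-in-$B$ neighbour of $v_i$ would itself be $H$-occupied by $A$, contradicting the assumption of this branch, so (3) is preserved. When no block is occupied-in-$A$-but-empty-in-$B$, a surplus block has both $A$ and $B$ $H$-occupied and strong exchange inside $\Psi(H_i)$ finishes; crucially, the dangerous configuration ``$A$ uses $H_i$, $B$ uses $v_i$'' cannot produce surplus, because $v_i\in B$ forces $\alpha(H_i)=1$ and hence $|A_i|\le 1=b_i$.

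The main obstacle is the coupling of the blocks through the edges of $X$: a priori, augmenting $B$ in one block can break condition (3) in another, and adjoining some $v_i$ can clash with a vertex already chosen by $B$. What I expect to be decisive is that condition (3), together with the fact that selecting $v_i$ in a local maximum stable set forces $H_i$ to be complete ($\alpha(H_i)=1$), exactly neutralizes these interactions, so that the blockwise greedoid structure of the $\Psi(H_i)$ lifts to $\Psi(G)$. Consequently I anticipate the real work to lie in proving the structural lemma cleanly---especially the identity $\alpha(G[N[S]])=|S|+\alpha(X[D])$---and in arranging the exchange case analysis so that these neutralizations become transparent.
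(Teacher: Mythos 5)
The paper does not actually prove Theorem \ref{th333}: it imports the statement from \cite{LevMan08b}, so there is no in-paper argument to compare yours against. Judged on its own, your proof is correct and self-contained. The blockwise characterization you propose is the right invariant, and the key identity $\alpha(G[N[S]])=|S|+\alpha(X[D])$ does hold under your conditions (1) and (2): any stable set of $G[N[S]]$ contributes at most $\alpha(H_i)=1$ in a block with $v_i\in S$, at most $|S_i|$ in an $H$-occupied block (since $S_i\in\Psi(H_i)$ and the alternative choice $v_i$ is worth only $1\le|S_i|$), and the vertices of $D$ are adjacent inside $G[N[S]]$ only to other $X$-vertices, so exactly $\alpha(X[D])$ of them can be added on top of the $v$-free configuration; hence $S$ is maximum there iff $D=\varnothing$, which is your condition (3). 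The necessity direction via the slice $\{S\in\Psi(G):S\subseteq V(H_i)\}=\Psi(H_i)$ is clean, since both the accessibility deletion and the exchanged element stay in $V(H_i)$. Your exchange case analysis is exhaustive: a surplus block is either $v$-occupied in $A$ and empty in $B$, $H$-occupied in $A$ and empty in $B$, or $H$-occupied in both with $|A_i|>|B_i|$, because $v_i\in B$ forces $\alpha(H_i)=1$ and so kills any surplus; and the decisive observation --- that an empty-in-$B$ $X$-neighbour of an adjoined $v_i$ must be occupied by $A$ (by condition (3) applied to $A$), hence $v$-occupied by the branch hypothesis, contradicting stability of $A$ --- is exactly what legitimizes adjoining $v_i$. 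The only piece left at sketch level is the routine verification of the displayed $\alpha$-identity, and it checks out (I also confirmed your lemma reproduces the paper's computation of $\Psi(P_4)$ viewed as $K_2\circ\{K_1,K_1\}$), so you have in effect supplied a complete proof of a result the paper only cites.
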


If each $H_{i}$ is a complete graph, then $X\circ\{H_{1},H_{2},...,H_{n}\}$ is
called the \textit{clique corona }of $X$ and $\{H_{1},H_{2},...,H_{n}\}$;
notice that the clique corona is well-covered graph (and very well-covered,
whenever $H_{i}=K_{1},1\leq i\leq n$). Recall that $G$ is
\textit{well-covered} if all its maximal stable sets have the same cardinality, \cite{Plummer}, and $G$ is \textit{very well-covered} if, in addition, it has no isolated vertices and $\left\vert V(G)\right\vert=2\alpha(G)$, \cite{Favaron}.

\begin{corollary}
\cite{LevMan08}, \cite{LevMan08a} If $G$ is the clique corona of $X$ and
$\{H_{1},H_{2},...,H_{n}\}$, then $\Psi(G)$ is a greedoid, for any graph $X$.
\end{corollary}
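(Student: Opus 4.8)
The plan is to reduce the corollary to Theorem \ref{th333}. By the definition of the clique corona, each factor $H_{i}$ is a complete graph $K_{m_{i}}$ with $m_{i}\geq1$, hence each $H_{i}$ is non-empty and the hypothesis of Theorem \ref{th333} is satisfied. It therefore suffices to show that $\Psi(K_{m})$ is a greedoid for every $m\geq1$; the conclusion then follows immediately from the ``if'' direction of Theorem \ref{th333}.

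First I would determine $\Psi(K_{m})$ explicitly. Since $\alpha(K_{m})=1$, no stable set of $K_{m}$, or of any of its induced subgraphs, can contain more than one vertex, so every $S\in\Psi(K_{m})$ satisfies $\left\vert S\right\vert \leq1$. Conversely, $\varnothing\in\Psi(K_{m})$ trivially, and for each vertex $v$ we have $N[v]=V(K_{m})$ together with $\{v\}\in\Omega(K_{m})$, whence $\{v\}\in\Psi(K_{m})$. Thus
\[
\Psi(K_{m})=\{\varnothing\}\cup\{\{v\}:v\in V(K_{m})\}.
\]

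Next I would verify the two greedoid axioms for this family. Accessibility holds because the only non-empty members are the singletons $\{v\}$, and $\{v\}-\{v\}=\varnothing\in\Psi(K_{m})$. For the exchange property, if $X,Y\in\Psi(K_{m})$ with $\left\vert X\right\vert =\left\vert Y\right\vert +1$, then necessarily $\left\vert Y\right\vert =0$ and $\left\vert X\right\vert =1$, say $X=\{v\}$; taking $x=v\in X-Y$ gives $Y\cup\{x\}=\{v\}\in\Psi(K_{m})$. Hence $\Psi(K_{m})$ is a greedoid.

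Finally, applying Theorem \ref{th333} with $H_{i}=K_{m_{i}}$ for $1\leq i\leq n$, each $\Psi(H_{i})$ being a greedoid, yields that $\Psi(G)$ is a greedoid for any graph $X$. I do not expect a genuine obstacle here: the entire argument rests on the elementary observation that a complete graph admits only $\varnothing$ and its singletons as local maximum stable sets, so the work is confined to identifying $\Psi(K_{m})$ and invoking the previously established corona theorem.
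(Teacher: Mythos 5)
Your proposal is correct and follows exactly the route the paper intends: the statement is placed as an immediate consequence of Theorem \ref{th333}, and you supply the only missing detail, namely the (trivial) verification that $\Psi(K_{m})=\{\varnothing\}\cup\{\{v\}:v\in V(K_{m})\}$ is a greedoid. Nothing further is needed.
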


In this paper we show that for any graph $G$, the family $\Psi(G)$ satisfies
the accessibility property if and only if $\Psi(G)$ is an interval greedoid.
We also prove that: $\Psi(G)$ is an antimatroid if and only if $G$ is a unique
maximum stable set whose $\Psi(G)$ satisfies the accessibility property, and
$\Psi(G)$ forms a matroid if and only if $G$ is a simplicial graph and every
non-simplicial vertex belongs to at least two different simplices.

\section{Separating examples}

Let us recall definitions of some classes of greedoids, \cite{BjZiegler}.

A \textit{matroid} is a greedoid $(V,\mathcal{F})$ enjoying the
\textit{hereditary property}:
\[
\text{\textit{if}}\ X\in\mathcal{F\ }\text{\textit{and}}\ Y\subset
X\text{,}\ \text{\textit{then}}\ Y\in\mathcal{F}.
\]

An \textit{antimatroid} is a greedoid $(V,\mathcal{F})$ \textit{closed under
union}:
\[
\text{\textit{if}}\ X,Y\in\mathcal{F}\text{\textit{,\ then}}\ X\cup
Y\in\mathcal{F}.
\]

A \textit{trimmed matroid} is the intersection of a matroid and an antimatroid.

An \textit{interval greedoid} is a greedoid $(V,\mathcal{F})$ satisfying the
following condition:
\[
\text{\textit{for\ every}}\ X\in\mathcal{F\ }\text{\textit{the\ family}%
}\ \{Y\in\mathcal{F}:Y\subseteq X\}\ \text{\textit{is\ an\ antimatroid.}}%
\]

A \textit{local poset greedoid} is a greedoid $(V,\mathcal{F})$ satisfying the property:
\[
\text{\textit{if\ }}X,Y,Z\in\mathcal{F\ }\text{\textit{and}}\ X,Y\subset
Z\text{\textit{,\ then}}\ X\cup Y,X\cap Y\in\mathcal{F}.
\]

The following result helps us to emphasize a number of separating examples.

\begin{lemma}
\label{lem4}If $\Omega(G)=\{S\}$, then $S-\{x\}\in\Psi(G)$ holds for any $x\in
S$.
\end{lemma}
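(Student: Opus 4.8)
The plan is to unwind the definition of membership in $\Psi(G)$: writing $A:=S-\{x\}$, the claim $S-\{x\}\in\Psi(G)$ means exactly that $A$ is a maximum stable set of the induced subgraph $G[N[A]]$, i.e. $A\in\Omega(G[N[A]])$. Since $A$ is a subset of the stable set $S$, it is certainly stable and contained in $N[A]$, so only its \emph{maximality} inside $G[N[A]]$ needs an argument. I would establish this by contradiction, using as the essential lever the hypothesis that $S$ is the \emph{unique} maximum stable set of $G$.

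The first fact I would record is that $x\notin N[A]$. Indeed, because $S$ is stable, $x$ has no neighbor in $S$, hence no neighbor in $A=S-\{x\}$; thus $x\notin N(A)$, and trivially $x\notin A$, so $x\notin N[A]$. In particular $x$ is not even a vertex of the induced subgraph $G[N[A]]$, so every stable set of $G[N[A]]$ automatically avoids $x$.

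Now suppose, toward a contradiction, that $A$ is not maximum in $G[N[A]]$. Then there is a stable set $A'$ of $G[N[A]]$ with $|A'|\ge |A|+1=|S|$. As $G[N[A]]$ is an induced subgraph, $A'$ is stable in $G$ as well, so $|A'|\le\alpha(G)=|S|$; combining the two inequalities gives $|A'|=|S|$ and hence $A'\in\Omega(G)$. The uniqueness $\Omega(G)=\{S\}$ then forces $A'=S$. But $A'\subseteq N[A]$ while $x\notin N[A]$, so $x\notin A'=S$, contradicting $x\in S$. This contradiction shows that $A$ is maximum in $G[N[A]]$, that is, $S-\{x\}\in\Psi(G)$.

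I do not anticipate a serious obstacle: the whole argument turns on the single structural observation that the removed vertex $x$ falls outside the closed neighborhood of the remaining set, so uniqueness of the maximum stable set immediately blocks any enlargement of $A$ within $G[N[A]]$. The only point demanding a little care is deriving $x\notin N[A]$ cleanly from the stability of $S$, since that is precisely what makes the counting contradiction go through. One could instead phrase maximality through Berge's criterion (Theorem \ref{th5}), matching each stable set of $G[N[A]]$ disjoint from $A$ into $A$, but the direct cardinality argument above is shorter and needs no matching construction.
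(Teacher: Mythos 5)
Your proof is correct and follows essentially the same route as the paper's: assume a larger stable set exists in $G[N[S-\{x\}]]$, use uniqueness of the maximum stable set to force it to equal $S$, and derive a contradiction from the fact that $x$, having no neighbors in $S-\{x\}$ by stability of $S$, cannot lie in $N[S-\{x\}]$. The only difference is presentational — you isolate the observation $x\notin N[S-\{x\}]$ up front, whereas the paper reaches the same contradiction at the end.
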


\begin{proof}
Let us suppose that $S-\{x\}\notin\Psi(G)$ is true for some $x\in S$. It
follows that there exists $A\in\Omega(G[N[S-\{x\}]])$ with $\left\vert
A\right\vert >\left\vert S-\{x\}\right\vert =\alpha(G)-1$. Hence, we obtain
that $A=S$ which implies $x\in N(S-\{x\})$, in contradiction with the fact
that $x\in S$.
\end{proof}

Let us remark that Lemma \ref{lem4} is not necessarily true when two or more
vertices are deleted from the unique maximum stable set; e.g., if
$\Omega(P_{2k+1})=\{S\}$, then $\mathrm{pend}(P_{2k+1})\subseteq S$, while
$S-\mathrm{pend}(P_{2k+1})\notin\Psi(P_{2k+1})$, for any $k\geq2$.

\begin{itemize}
\item Let us observe that
\[
\mathcal{F}=\{\emptyset,\{a\},\{b\},\{c\},\{a,b\},\{a,c\},\{a,b,c\}\}
\]
is a greedoid on $\{a,b,c\}$, but there is no graph $G$ such that
$\Psi(G)=\mathcal{F}$, because, according to Lemma \ref{lem4}, $\{a,b,c\}\in
\mathcal{F}$ implies that $\{b,c\}\in\mathcal{F}$, as well.

\item Let us notice that
\[
\mathcal{F}=\{\emptyset
,\{a\},\{c\},\{a,b\},\{a,c\},\{c,d\},\{a,b,c\},\{a,c,d\},\{a,b,c,d\}\}
\]
is an antimatroid on $\{a,b,c,d\}$, but there is no graph $G$ such that
$\Psi(G)=\mathcal{F}$, because, according to Lemma \ref{lem4}, $\{a,b,c,d\}\in
\mathcal{F}$ implies that $\{a,b,d\}\in\mathcal{F}$, too. Consequently, we
infer also that there is an interval greedoid $\mathcal{F}$, such that
$\mathcal{F}\neq\Psi(G)$ is true for any graph $G$.

\item If $G=\overline{K_{n}}$, then $\Psi(G)$ produces both a matroid, an
antimatroid and a local poset greedoid. The same is true for some trees, e.g.,
for $P_{3}$.

\begin{figure}[h]
\setlength{\unitlength}{1.0cm} \begin{picture}(5,0.7)\thicklines
\multiput(4,0)(1,0){6}{\circle*{0.29}}
\put(4,0){\line(1,0){5}}
\put(4,0.4){\makebox(0,0){$a$}}
\put(5,0.4){\makebox(0,0){$b$}}
\put(6,0.4){\makebox(0,0){$c$}}
\put(7,0.4){\makebox(0,0){$d$}}
\put(8,0.4){\makebox(0,0){$e$}}
\put(9,0.4){\makebox(0,0){$f$}}
\end{picture}
\caption{A tree $T$ whose $\Psi(T)$ is neither a matroid nor an antimatroid.}
\label{fig41}
\end{figure}
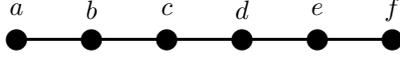

\item The family of maximum local stable sets of the tree $P_{6}$ (see Figure
\ref{fig41}) is not a matroid because while $\left\{  a,c\right\}  \in
\Psi(P_{6})$, the set $\left\{  c\right\}  $ does not belong to $\Psi(P_{6})$.
The family $\Psi(P_{6})$ is not an antimatroid, too. One of the reasons is
that while $\left\{  a,c\right\}  ,\left\{  d,f\right\}  \in\Psi(P_{6})$, the
set $\left\{  a,c\right\}  \cup\left\{  d,f\right\}  $ is not even stable.

\item It is also easy to check that: $\Psi(P_{5})$ is an antimatroid and not a
matroid; $\Psi(P_{2})$ is a matroid, but it is not an antimatroid.

\item If $G=P_{4}$ or $G=K_{1,n},n\geq1$, then $\Psi(G)$ is a local poset greedoid.

\item $\Psi(P_{5})$ is a greedoid, but it is not a local poset greedoid. To
see that, let us consider $X=\{a,b\},Y=\{b,c\},Z=\{a,b,c\}$, that clearly
satisfy
\[
X,Y,Z\in\Psi(P_{5}),X\subset Z,Y\subset Z,X\cup Y\in\Psi(P_{5}),
\]
but $X\cap Y=\{b\}\notin\Psi(P_{5})$.

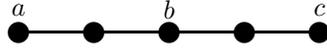
\begin{figure}[h]
\setlength{\unitlength}{1.0cm} \begin{picture}(5,0.7)\thicklines
\multiput(5,0)(1,0){5}{\circle*{0.29}}
\put(5,0){\line(1,0){4}}
\put(5,0.3){\makebox(0,0){$a$}}
\put(7,0.3){\makebox(0,0){$b$}}
\put(9,0.3){\makebox(0,0){$c$}}
\end{picture}
\caption{$\Psi(P_{5})$ is a greedoid, but not a local poset greedoid.}
\label{fig23}
\end{figure}

\item Let $V(P_{4})=\{a,b,c,d\},E(P_{4})=\{ab,bc,cd\}$. Then, $\Psi(P_{4})$ is
a greedoid, but is neither a matroid, since
\[
\{a,c\}\in\Psi(P_{4})\text{, but }\{c\}\notin\Psi(P_{4}),
\]
nor an antimatroid, because
\[
\{a,c\},\{b,d\}\in\Psi(P_{4})\text{, while }\{a,b,c,d\}\notin\Psi(P_{4}).
\]
On the other hand, the family
\[
M=\{\varnothing,\{a\},\{b\},\{c\},\{d\},\{a,c\},\{a,d\},\{b,c\},\{b,d\}\}
\]
is a matroid, the family
\[
AM=\{\{a\},\{d\},\{a,c\},\{a,d\},\{b,d\},\{a,b,d\},\{a,c,d\},\{a,b,c,d\}\}
\]
is an antimatroid, and $\Psi(P_{4})=M\cap AM$, i.e., $\Psi(P_{4})$ is a
trimmed matroid.
\end{itemize}

\section{An interval greedoid on vertex set of a graph}

Let us observe that the family $\Psi(G)$ is not generally closed under
intersection or difference, even if $G$ has a unique maximum stable set. For
instance, the tree $P_{7}$ in Figure \ref{fig42} has a unique maximum stable
set, namely $\{a,c,e,g\}$, and while
\[
A=\{a,c\},B=\{a,d\},C=\{c,e,g\}\in\Psi(P_{7}),
\]
none of the sets $A-B,A\cap C$ belong to $\Psi(P_{7})$.

However, if every connected component of G is a complete graph, then $\Psi(G)
$ is obviously closed under intersection or difference. As far as the union
operation is concerned, we have the following general statement.

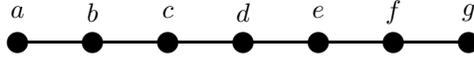
\begin{figure}[h]
\setlength{\unitlength}{1.0cm} \begin{picture}(5,0.7)\thicklines
\multiput(4,0)(1,0){7}{\circle*{0.29}}
\put(4,0){\line(1,0){6}}
\put(4,0.4){\makebox(0,0){$a$}}
\put(5,0.4){\makebox(0,0){$b$}}
\put(6,0.4){\makebox(0,0){$c$}}
\put(7,0.4){\makebox(0,0){$d$}}
\put(8,0.4){\makebox(0,0){$e$}}
\put(9,0.4){\makebox(0,0){$f$}}
\put(10,0.4){\makebox(0,0){$g$}}
\end{picture}
\caption{A tree $T$ with a unique maximum stable set: $\{a,c,e,g\}$.}
\label{fig42}
\end{figure}

\begin{theorem}
\label{th3}For any graph $G$, if $A,B\in\Psi(G)$ and $A\cup B$ is stable, then
$A\cup B\in\Psi(G)$.
\end{theorem}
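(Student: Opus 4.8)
The plan is to apply Berge's characterization of maximum stable sets (Theorem~\ref{th5}) inside the induced subgraph $G[N[A\cup B]]$. Since $A\cup B$ is assumed stable, it is a stable set of this subgraph, so by Berge it suffices to show that \emph{every} stable set $S$ of $G[N[A\cup B]]$ disjoint from $A\cup B$ can be matched into $A\cup B$; the converse direction of Berge's theorem then delivers $A\cup B\in\Omega(G[N[A\cup B]])$, which is exactly $A\cup B\in\Psi(G)$.

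First I would record the elementary identity $N[A\cup B]=N[A]\cup N[B]$, so that any such $S$ satisfies $S\subseteq N(A\cup B)\subseteq N(A)\cup N(B)$. I then split $S$ into $S_{1}=S\cap N(A)$ and $S_{2}=S\setminus N(A)$, noting that $S_{2}\subseteq N(B)$. Each piece is a stable set lying inside the relevant closed neighborhood and disjoint from the corresponding local maximum stable set: $S_{1}$ is a stable set of $G[N[A]]$ disjoint from $A$, and $S_{2}$ is a stable set of $G[N[B]]$ disjoint from $B$. Applying Berge's theorem to the hypotheses $A\in\Omega(G[N[A]])$ and $B\in\Omega(G[N[B]])$ then yields a matching $M_{1}$ of $S_{1}$ into $A$ and a matching $M_{2}$ of $S_{2}$ into $B$.

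The remaining step, which I expect to be the main obstacle, is to glue $M_{1}$ and $M_{2}$ into a single matching of $S$ into $A\cup B$. Because $S_{1}$ and $S_{2}$ are disjoint, the only way $M_{1}\cup M_{2}$ could fail to be a matching is for some vertex $v$ to be an endpoint of both an $M_{1}$-edge and an $M_{2}$-edge; such a $v$ would necessarily lie in $A\cap B$. This is precisely where the innocuous-looking possibility $A\cap B\neq\varnothing$ must be dealt with, since in general $A$ and $B$ need not be disjoint. The observation that dissolves the difficulty is that every vertex of $S_{2}$ was chosen \emph{outside} $N(A)$, hence has no neighbor in $A$ whatsoever, and in particular no neighbor in $A\cap B$; consequently no edge of $M_{2}$ can touch $A\cap B$, the $A$-endpoints of $M_{1}$ and the $B$-endpoints of $M_{2}$ are disjoint, and $M_{1}\cup M_{2}$ is a genuine matching of $S$ into $A\cup B$.

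Finally I would invoke the converse direction of Berge's theorem: since an arbitrary stable set of $G[N[A\cup B]]$ disjoint from $A\cup B$ has been matched into $A\cup B$, the stable set $A\cup B$ must be maximum in $G[N[A\cup B]]$, giving $A\cup B\in\Psi(G)$. Equivalently, one could bypass the explicit matchings and verify Hall's condition directly for the bipartite graph between $S$ and $A\cup B$, where the same disjointness observation forces $|N_{G}(W)\cap(A\cup B)|\ge|W|$ for every $W\subseteq S$; I would nonetheless favor the Berge formulation, as it matches the tools already set up in the paper.
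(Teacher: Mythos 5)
Your proof is correct, but it takes a genuinely different route from the paper's. The paper does not use Berge's theorem here at all: it takes an arbitrary $S\in\Omega(G[N[A\cup B]])$ (not assumed disjoint from $A\cup B$), splits it into the three pieces $S\cap(N[A]-N[A\cap B])$, $S\cap(N[B]-N[A\cap B])$ and $S\cap N[A\cap B]$, proves $\left\vert S\cap N[A\cap B]\right\vert \geq\left\vert A\cap B\right\vert$ by an exchange argument, and then adds the two inequalities $\left\vert S\cap N[A]\right\vert \leq\left\vert A\right\vert$, $\left\vert S\cap N[B]\right\vert \leq\left\vert B\right\vert$ to conclude $\left\vert S\right\vert \leq\left\vert A\cup B\right\vert$ by inclusion--exclusion. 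You instead invoke both directions of Theorem \ref{th5}: the forward direction inside $G[N[A]]$ and $G[N[B]]$ to extract matchings $M_{1},M_{2}$, and the converse direction inside $G[N[A\cup B]]$ to certify maximality of $A\cup B$. Your decomposition $S_{1}=S\cap N(A)$, $S_{2}=S\setminus N(A)$ is asymmetric and two-part rather than the paper's symmetric three-part one, and your key observation --- that no vertex of $S_{2}$ has a neighbour in $A$, so $M_{2}$ cannot collide with $M_{1}$ on $A\cap B$ --- correctly dispatches the overlap issue that the paper handles by the counting correction term $\left\vert S_{AB}\right\vert \geq\left\vert A\cap B\right\vert$. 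What your version buys is an explicit matching certificate and a cleaner treatment of $A\cap B$; what the paper's version buys is self-containment (a pure cardinality count needing only the definition of $\Psi$) and symmetry in $A$ and $B$. Both are complete and of comparable length.
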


\begin{proof}
For $S\in\Omega(N[G[A\cup B]])$ let us denote:
\begin{align*}
S_{A}  &  =S\cap(N[A]-N[A\cap B]),\\
\quad S_{B}  &  =S\cap(N[B]-N[A\cap B]),\\
S_{AB}  &  =S\cap N[A\cap B].
\end{align*}
Since $A,B\in\Psi(G)$, it follows also that
\[
\left\vert S_{A}\right\vert +\left\vert S_{AB}\right\vert \leq\left\vert
A\right\vert \text{ and }\left\vert S_{B}\right\vert +\left\vert
S_{AB}\right\vert \leq\left\vert B\right\vert .
\]
On the other hand, $\left\vert S_{AB}\right\vert \geq\left\vert A\cap
B\right\vert $, because otherwise, $S_{A}\cup(A\cap B)\cup S_{B}$ is stable in
$N[A\cup B]$ with $\left\vert S_{A}\cup(A\cap B)\cup S_{B}\right\vert
>\left\vert S\right\vert $, in contradiction with the choice $S\in
\Omega(N[G[A\cup B]])$. Consequently, we obtain:
\[
\left\vert S_{A}\right\vert +\left\vert S_{AB}\right\vert +\left\vert
S_{B}\right\vert +\left\vert A\cap B\right\vert \leq\left\vert S_{A}%
\right\vert +2\left\vert S_{AB}\right\vert +\left\vert S_{B}\right\vert
\leq\left\vert A\right\vert +\left\vert B\right\vert
\]
which implies:
\[
\left\vert S\right\vert =\left\vert S_{A}\right\vert +\left\vert
S_{AB}\right\vert +\left\vert S_{B}\right\vert \leq\left\vert A\right\vert
+\left\vert B\right\vert -\left\vert A\cap B\right\vert =\left\vert A\cup
B\right\vert .
\]
Hence, we get that $A\cup B\in\Omega(G[N[A\cup B]])$, i.e., $A\cup B\in
\Psi(G)$.
\end{proof}

\begin{figure}[h]
\setlength{\unitlength}{1.0cm} \begin{picture}(5,1.5)\thicklines
\multiput(5.5,0)(1,0){3}{\circle*{0.29}}
\multiput(5.5,1)(1,0){4}{\circle*{0.29}}
\put(5.5,0){\line(0,1){1}}
\put(5.5,0){\line(1,1){1}}
\put(6.5,0){\line(0,1){1}}
\put(6.5,0){\line(1,1){1}}
\put(6.5,1){\line(1,-1){1}}
\put(7.5,0){\line(0,1){1}}
\put(7.5,0){\line(1,1){1}}
\put(5.5,1.3){\makebox(0,0){$a$}}
\put(6.5,1.3){\makebox(0,0){$b$}}
\put(7.5,1.3){\makebox(0,0){$c$}}
\put(8.5,1.3){\makebox(0,0){$d$}}
\end{picture}\caption{A graph satisfying $A\cap\mathrm{simp}(G)\neq\emptyset$
for every $A\in\Psi(G)$.}
\label{fig1010}
\end{figure}
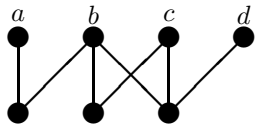

The condition \textquotedblright$A\cap\mathrm{simp}(G)\neq\emptyset$, for any
$A\in\Psi(G)$\textquotedblright\ is clearly necessary, but is not sufficient
to guarantee the accessibility property for the family $\Psi(G)$; e.g., the
graph $G$ in Figure \ref{fig1010} has $\{a,b,c\}\in\Psi(G),\{a,b,c\}\cap
\mathrm{simp}(G)=\{a\}$, but no subset consisting of two elements of
$\{a,b,c\}$ belongs to $\Psi(G)$.

It is worth observing that if $\Psi(G)$ has the accessibility property and
$S\in\Psi(G)$, $\left\vert S\right\vert =k\geq2$, then there is a chain
\[
\{x_{1}\}\subset\{x_{1},x_{2}\}\subset...\subset\{x_{1},...,x_{k-1}%
\}\subset\{x_{1},...,x_{k-1},x_{k}\}=S
\]
such that $\{x_{1},x_{2},...,x_{j}\}\in\Psi(G)$, for all $j\in\{1,...,k-1\}$.
Such a chain we call an \textit{accessibility chain }of $S$.

\begin{theorem}
\label{th4}If the family $\Psi(G)$ of a graph $G$ satisfies the accessibility
property, then the following assertions are true:

\emph{(i)} $\Psi(G)$ forms a greedoid on its vertex set;

\emph{(ii)} $\Psi(G)$ is an interval greedoid.
\end{theorem}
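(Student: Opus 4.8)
The plan is to handle (i) first---since the accessibility property is assumed, only the exchange property is missing---and then to deduce (ii) almost formally from (i) together with Theorem \ref{th3}. Throughout I would keep in mind that $\varnothing\in\Psi(G)$, so that $\Psi(G)$ is a non-empty set system and each restriction below is non-empty as well.

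For the exchange property, take $X,Y\in\Psi(G)$ with $|X|=|Y|+1$. First I would observe that $X\not\subseteq N[Y]$: otherwise $X$ would be a stable set of $G[N[Y]]$ with $|X|>|Y|=\alpha(G[N[Y]])$, contradicting $Y\in\Omega(G[N[Y]])$. Hence $W:=X-N[Y]\neq\varnothing$, and for every $x\in W$ the set $Y\cup\{x\}$ is stable. The pivotal computation is then that, for such an $x$,
\[
\alpha(G[N[Y\cup\{x\}]])=|Y|+\max(1,\alpha(G[N(x)-N[Y]])),
\]
obtained by splitting a stable set of $G[N[Y\cup\{x\}]]$ along $N[Y]$ and using $\alpha(G[N[Y]])=|Y|$. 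Consequently $Y\cup\{x\}\in\Psi(G)$ if and only if $N(x)-N[Y]$ induces a clique, i.e. $x$ is a simplicial vertex of $G':=G[V(G)-N[Y]]$. Thus the exchange property reduces to the purely structural claim that the non-empty stable set $W$ contains a vertex that is simplicial in $G'$.

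To produce such a vertex I would use the local maximality of $X$ to show that $W$ is itself a local maximum stable set of $G'$ (equivalently $Y\cup W\in\Psi(G)$), and then peel $W$ down an accessibility chain of $\Psi(G')$ until a singleton $\{x\}\in\Psi(G')$ is reached; by the reduction above, $\{x\}\in\Psi(G')$ says precisely that $x$ is simplicial in $G'$. That $W\in\Omega(G'[N_{G'}[W]])$ I would attack via Berge's criterion (Theorem \ref{th5}): any stable $T\subseteq N_{G'}(W)$ disjoint from $W$ is also a stable subset of $N[X]$ disjoint from $X$, hence matches into $X$, and the point is to reroute that matching so that it lands inside $W$. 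I expect this rerouting to be the main obstacle. The difficulty is caused exactly by the vertices of $X\cap N(Y)$: such a vertex lies in $X$, is adjacent to $Y$, yet may still have neighbours in $G'$, so it can absorb part of $T$ and thereby defeat both the naive augmentation $(X-N_{G}(T))\cup T$ inside $N[X]$ and the Hall condition for matching $T$ into $W$. Neutralising these vertices seems to require importing the maximality of $Y$ while remaining inside $N[X]$---in effect an augmenting-path argument interleaving the two local maximalities---and this is equivalent to establishing the projection lemma $X-N[Y]\in\Psi(G-N[Y])$ together with the fact that $\Psi(G-N[Y])$ again satisfies accessibility.

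Granting (i), part (ii) is short. Fix $X\in\Psi(G)$ and set $\mathcal{F}_{X}=\{Y\in\Psi(G):Y\subseteq X\}$; I must verify it is an antimatroid. It is \emph{accessible}: for non-empty $Y\in\mathcal{F}_{X}$, accessibility of $\Psi(G)$ yields $y\in Y$ with $Y-\{y\}\in\Psi(G)$, and $Y-\{y\}\subseteq X$, so $Y-\{y\}\in\mathcal{F}_{X}$. It satisfies \emph{exchange}: for $Y,Z\in\mathcal{F}_{X}$ with $|Y|=|Z|+1$, part (i) supplies $y\in Y-Z$ with $Z\cup\{y\}\in\Psi(G)$, and $Z\cup\{y\}\subseteq X$, whence $Z\cup\{y\}\in\mathcal{F}_{X}$. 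Finally it is \emph{closed under union}: if $Y,Z\in\mathcal{F}_{X}$ then $Y\cup Z\subseteq X$ is stable, so $Y\cup Z\in\Psi(G)$ by Theorem \ref{th3}, and hence $Y\cup Z\in\mathcal{F}_{X}$. Thus every $\mathcal{F}_{X}$ is a greedoid closed under union, i.e. an antimatroid, which is exactly the statement that $\Psi(G)$ is an interval greedoid.
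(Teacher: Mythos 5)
Your part (ii) is essentially the paper's argument (restrict to $\mathcal{F}_X$, note the union of two subsets of $X$ is stable, invoke Theorem \ref{th3}), and your reduction at the start of part (i) is correct: $X\not\subseteq N[Y]$, and $Y\cup\{x\}\in\Psi(G)$ for $x\in W=X-N[Y]$ exactly when $x$ is simplicial in $G'=G-N[Y]$. But part (i) is not a proof: the entire burden has been shifted onto two unproven claims, namely that $W\in\Psi(G')$ and that $\Psi(G')$ again satisfies accessibility, and you explicitly concede that the ``rerouting'' needed for the first one is an unresolved obstacle. Neither claim is routine. The second in particular --- that accessibility of $\Psi(G)$ is inherited by $\Psi(G-N[Y])$ --- is a strong structural assertion about a different graph and a different set family, offered with no justification at all. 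And for the first, your own diagnosis is accurate: a stable set $T\subseteq N_{G'}[W]$ larger than $W$ cannot simply be combined with $X\cap N[Y]$ to contradict $X\in\Psi(G)$, because $T$ may meet $N(X\cap N[Y])$. So the exchange property is reduced but not established.

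The paper closes exactly this gap by never leaving $\Psi(G)$. Writing $Y=A$ (smaller) and $X=B$ (larger), it takes an accessibility chain $\{b_1\}\subset\{b_1,b_2\}\subset\dots\subset B$ of the \emph{larger} set and lets $b_{k+1}$ be the first chain element outside $N[A]$. The leverage comes from the fact that the prefix $\{b_1,\dots,b_{k+1}\}$ is itself in $\Psi(G)$: Berge's criterion (Theorem \ref{th5}) matches $\{b_1,\dots,b_k\}$ into $A$, producing $\{a_1,\dots,a_k\}\subseteq A$, and then any hypothetical stable set of $G[N[A\cup\{b_{k+1}\}]]$ of size $\ge |A|+2$, split into its part in $N[A]$ and its part in $N(b_{k+1})$, is shown to force either a stable set of size $k+2$ inside $N[\{b_1,\dots,b_{k+1}\}]$ (contradicting local maximality of the prefix) or a stable set larger than $A$ inside $N[A]$ (contradicting $A\in\Psi(G)$). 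This is precisely the ``interleaving of the two local maximalities'' you anticipated; the point you missed is that the accessibility chain of $B$ hands you, for free, the intermediate local maximum stable set $\{b_1,\dots,b_{k+1}\}$ that makes the interleaving work, with no need for a projection lemma or for accessibility of $\Psi(G-N[Y])$.
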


\begin{proof}
\emph{(i)} We have to prove that $\Psi(G)$ satisfies also the exchange property.

Let $A,B\in$ $\Psi(G)$ such that $\left\vert B\right\vert =\left\vert
A\right\vert +1=m+1$. Hence, there is an accessibility chain for $B$, say
\[
\{b_{1}\}\subset\{b_{1},b_{2}\}\subset...\subset\{b_{1},...,b_{m}\}\subset B.
\]

Since $B$ is stable, $A\in$ $\Psi(G)$ but $\left\vert A\right\vert <\left\vert
B\right\vert $, it follows that there exists some $b\in B-A$, such that
$b\notin N[A]$.

If $b=b_{1}$, then
\[
A\cup\{b_{1}\} \leq\alpha(N[A\cup\{b_{1}\}])=\alpha(N[A]\cup N[\{b_{1}%
\}])\leq\alpha(N[A]) + \alpha(N[\{b_{1}\}])=|A|+1=|A\cup\{b_{1}\}|,
\]
because $b_{1}$ is a simplicial vertex and $A\cup\{b_{1}\}$ is a stable set.
Consequently, $A\cup\{b_{1}\}\in\Psi(G)$.

Otherwise, let $b_{k+1}\in B, k\geq1$ be the first vertex in $B$ satisfying
the conditions:
\[
b_{1},...,b_{k}\in N[A]\text{ and }b_{k+1}\notin N[A].
\]
Since $\{b_{1},...,b_{k}\}$ is stable in $G[N[A]]$ and $A\in\Omega(G[N[A]])$,
Theorem \ref{th5} implies that there is a matching $M$ from $\{b_{1}%
,...,b_{k}\}-A$ into $A$, i.e., there is $\{a_{1},...,a_{k}\}\subseteq A$ such
that for any $i\in\{1,...,k\}$ either $a_{i}=b_{i}$ or $a_{i}b_{i}\in M$.

We show that $A\cup\{b_{k+1}\}\in\Psi(G)$.

If not, there exists some $\{c_{1},...,c_{p},d_{1},...,d_{s}\}$ in
$\Omega(G[N[A\cup\{b_{k+1}\}]])$ such that:
\[
p+s\geq m+2,\{c_{1},...,c_{p}\}\subseteq N[A]\text{ and }\{d_{1}%
,...,d_{s}\}\subseteq N(b_{k+1}).
\]
Since $\{b_{1},...,b_{k+1}\}$ is in $\Psi(G)$, $\{a_{1},...,a_{k}%
,d_{1},...,d_{s}\}\subseteq N[\{b_{1},...,b_{k+1}\}]$, while $\{a_{1}%
,...,a_{k}\}$ and $\{d_{1},...,d_{s}\}$ are stable sets, it follows that
\[
\left\vert \{d_{1},...,d_{s}\}\cap N[\{a_{1},...,a_{k}\}]\right\vert \geq s-1,
\]
because otherwise $\{a_{1},...,a_{k},d_{1},...,d_{s}\}$ contains some stable
set of $k+2$ vertices, contradicting the fact that
\[
\{b_{1},...,b_{k+1}\}\in\Omega(G[N[\{b_{1},...,b_{k+1}\}]]).
\]
So, we may suppose that $\{d_{1},...,d_{s-1}\}\subseteq N[\{a_{1}%
,...,a_{k}\}]$. Since
\[
\{c_{1},...,c_{p}\}\subset N[A]\text{ and }\{d_{1},...,d_{s-1}\}\subseteq
N[\{a_{1},...,a_{k}\}],
\]
it follows that
\[
W=\{c_{1},...,c_{p},d_{1},...,d_{s-1}\}\subseteq N[A]
\]
and $W$ is a stable set of size
\[
\left\vert W\right\vert =p+s-1\geq m+1,
\]
i.e., $W$ is larger than $A$, in contradiction with the choice $A\in\Psi(G)$.

\emph{(ii)} For $A\in\Psi(G)$ let us denote
\[
\Psi(A)=\{B\in\Psi(G):B\subseteq A\}.
\]
Since, by part \emph{(i)}, $\Psi(G)$ is a greedoid, it is clear that $\Psi(A)
$ is also a greedoid. For any $B_{1},B_{2}$ belonging to $\Psi(A)$, the set
$B_{1}\cup B_{2}$ is stable, because $A$ is stable. According to Theorem
\ref{th3}, it follows that $B_{1}\cup B_{2}\in\Psi(A)$. Hence, $\Psi(A)$ is an
antimatroid and consequently, $\Psi(G)$ is an interval greedoid.
\end{proof}

As a consequence, we may say that all the greedoids we have obtained by
Theorems \ref{th2}, \ref{th22}, \ref{th33}, and \ref{th333}, are interval greedoids.

\begin{corollary}
The family $\Psi(G)$ of a graph $G$ satisfies the accessibility property if
and only if $\Psi(G)$ forms an interval greedoid.
\end{corollary}

\section{The graphs whose $\Psi(G)$ is either an antimatroid or a matroid}

If $\left\vert \Omega(G)\right\vert =1$, then $G$ is called a \textit{unique
maximum stable set graph}, \cite{Gunther}, \cite{HopSta}, \cite{levm1},
\cite{SieToppVolk}.

\begin{lemma}
\label{lem1}$G$ is a unique maximum stable set graph if and only if $\Psi(G) $
is closed under union.
\end{lemma}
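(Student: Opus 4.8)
The plan is to prove both directions of the equivalence. For the forward direction, I would assume $\left\vert \Omega(G)\right\vert =1$, say $\Omega(G)=\{S\}$, and show $\Psi(G)$ is closed under union. Take any $A,B\in\Psi(G)$. By Theorem \ref{th1} (Nemhauser--Trotter) every local maximum stable set extends to some maximum stable set; but $S$ is the \emph{only} maximum stable set, so necessarily $A\subseteq S$ and $B\subseteq S$. Hence $A\cup B\subseteq S$, and since $S$ is stable, $A\cup B$ is stable as well. Now Theorem \ref{th3} applies directly: if $A,B\in\Psi(G)$ and $A\cup B$ is stable, then $A\cup B\in\Psi(G)$. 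This gives closure under union.

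For the converse I would argue contrapositively: assume $\left\vert \Omega(G)\right\vert \geq 2$ and produce two sets in $\Psi(G)$ whose union is not in $\Psi(G)$, thereby violating closure under union. Pick two distinct maximum stable sets $S_{1},S_{2}\in\Omega(G)$. Each $S_{i}$ is trivially a local maximum stable set, since $S_{i}\in\Omega(G)\subseteq\Omega(G[N[S_{i}]])$, so $S_{1},S_{2}\in\Psi(G)$. The natural candidate for the violating pair is $S_{1},S_{2}$ themselves: their union $S_{1}\cup S_{2}$ cannot be a local maximum stable set, because $\left\vert S_{1}\cup S_{2}\right\vert >\alpha(G)$ (the union strictly exceeds the maximum size whenever the two sets differ), so $S_{1}\cup S_{2}\notin\Psi(G)$ by Theorem \ref{th1}, which forces every member of $\Psi(G)$ to have cardinality at most $\alpha(G)$.

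The step requiring the most care is the cardinality claim $\left\vert S_{1}\cup S_{2}\right\vert >\alpha(G)$ for distinct $S_{1},S_{2}\in\Omega(G)$. Since both have size exactly $\alpha(G)$ and are distinct, $\left\vert S_{1}\cup S_{2}\right\vert =2\alpha(G)-\left\vert S_{1}\cap S_{2}\right\vert \geq 2\alpha(G)-(\alpha(G)-1)=\alpha(G)+1>\alpha(G)$, using $\left\vert S_{1}\cap S_{2}\right\vert \leq\alpha(G)-1$ which holds because $S_{1}\neq S_{2}$. Combined with Theorem \ref{th1} (which bounds every local maximum stable set by $\alpha(G)$), this shows $S_{1}\cup S_{2}\notin\Psi(G)$, completing the contrapositive. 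I expect no genuine obstacle here; the main subtlety is simply invoking Theorem \ref{th3} in the right direction for the forward implication and being explicit that Nemhauser--Trotter forces local maximum stable sets into the unique maximum stable set.
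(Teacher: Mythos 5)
Your proposal is correct and follows essentially the same route as the paper: the forward direction via Theorem \ref{th1} and Theorem \ref{th3}, and the converse by observing that two distinct maximum stable sets lie in $\Psi(G)$ but their union is too large to be a (local maximum) stable set. The paper phrases the converse as a direct contradiction (the union would have to be stable yet exceed $\alpha(G)$) rather than via your cardinality bound from Theorem \ref{th1}, but this is an immaterial difference.
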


\begin{proof}
Let $\Omega(G)=\{S\}$ and $A,B\in\Psi(G)$. By Theorem \ref{th1}, both $A$ and
$B$ are subsets of $S$. Hence, $A\cup B$ is a stable set in $G$, and according
to Theorem \ref{th3}, we infer that $A\cup B\in\Psi(G)$.\mathstrut

Conversely, let $\Psi(G)$ be closed under union. If $\Omega(G)$ contains two
different elements, say $S_{1},S_{2}$, then $S_{1},S_{2}\in\Psi(G)$ and
consequently, $S_{1}\cup S_{2}\in\Psi(G)$. Hence, $S_{1}\cup S_{2}$ must be a
stable set in $G$, in contradiction with $\left\vert S_{1}\cup S_{2}%
\right\vert >\alpha(G)$.
\end{proof}

Notice that the graphs $G_{1},G_{2}$ from Figure \ref{Fig22} are unique
maximum stable set graphs, but only $\Psi(G_{1})$ does not satisfy the
accessibility property, since $\{y,z\}\in\Psi(G_{1})$, while $\{y\},\{z\}$ do
not belong to $\Psi(G_{1})$. Hence, by Theorem \ref{th4}, only $\Psi(G_{2})$
is a greedoid. Moreover, the following theorem shows that $\Psi(G_{2})$ is
even an antimatroid.

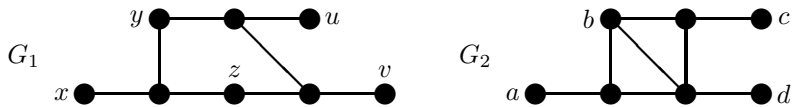
\begin{figure}[h]
\setlength{\unitlength}{1.0cm} \begin{picture}(5,1.2)\thicklines
\multiput(3,0)(1,0){5}{\circle*{0.29}}
\multiput(4,1)(1,0){3}{\circle*{0.29}}
\put(3,0){\line(1,0){4}}
\put(4,1){\line(1,0){2}}
\put(4,0){\line(0,1){1}}
\put(5,1){\line(1,-1){1}}
\put(2.7,0){\makebox(0,0){$x$}}
\put(3.7,1){\makebox(0,0){$y$}}
\put(5,0.3){\makebox(0,0){$z$}}
\put(6.3,1){\makebox(0,0){$u$}}
\put(7,0.3){\makebox(0,0){$v$}}
\put(2.2,0.5){\makebox(0,0){$G_{1}$}}
\multiput(9,0)(1,0){4}{\circle*{0.29}}
\multiput(10,1)(1,0){3}{\circle*{0.29}}
\put(9,0){\line(1,0){3}}
\put(10,1){\line(1,0){2}}
\put(10,0){\line(0,1){1}}
\put(10,1){\line(1,-1){1}}
\put(11,0){\line(0,1){1}}
\put(8.7,0){\makebox(0,0){$a$}}
\put(9.7,1){\makebox(0,0){$b$}}
\put(12.3,1){\makebox(0,0){$c$}}
\put(12.3,0){\makebox(0,0){$d$}}
\put(8.2,0.5){\makebox(0,0){$G_{2}$}}
\end{picture}
\caption{$\Omega(G_{i})=\{S_{i}\},i=1,2$, where ${S}_{1}{=\{x,y,z,u,v\}}
${\ and }$S_{2}=\{a,b,c,d\}$.}
\label{Fig22}
\end{figure}

\begin{theorem}
\label{th8}For any graph $G$, the following assertions are equivalent:

\emph{(i)} $\Psi(G)$ is an antimatroid;

\emph{(ii)} $G$ is a unique maximum stable set graph and $\Psi(G)$ satisfies
the accessibility property.
\end{theorem}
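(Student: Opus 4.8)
The plan is to decompose the antimatroid property into its two constituent requirements and match each against one of the already-established results. By definition an antimatroid is a greedoid that is \emph{closed under union}, and a greedoid is a set system enjoying both \emph{accessibility} and \emph{exchange}. Thus I would first observe that the assertion ``$\Psi(G)$ is an antimatroid'' is equivalent to the conjunction of three facts: $\Psi(G)$ satisfies accessibility, $\Psi(G)$ satisfies exchange, and $\Psi(G)$ is closed under union. The whole strategy is then to argue that, once accessibility is assumed, the exchange property comes for free via Theorem \ref{th4}, and that the closed-under-union condition is precisely the unique-maximum-stable-set hypothesis via Lemma \ref{lem1}.

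For the direction (ii)$\Rightarrow$(i), I would assume that $G$ has a unique maximum stable set and that $\Psi(G)$ satisfies the accessibility property. Theorem \ref{th4}(i) then says that accessibility alone forces $\Psi(G)$ to be a greedoid, so in particular the exchange property holds automatically. Lemma \ref{lem1} says that $G$ being a unique maximum stable set graph is equivalent to $\Psi(G)$ being closed under union. Since a greedoid that is closed under union is, by definition, an antimatroid, assertion (i) follows immediately.

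For the converse (i)$\Rightarrow$(ii), I would assume $\Psi(G)$ is an antimatroid. Being an antimatroid, it is in particular a greedoid, and so it satisfies the accessibility property; this supplies the second half of (ii). Moreover, as an antimatroid it is closed under union, so Lemma \ref{lem1} applies in reverse and yields that $G$ is a unique maximum stable set graph, which is the first half of (ii).

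Since all the substantive content is carried by Lemma \ref{lem1} and Theorem \ref{th4}, the proof is a short assembly and I do not anticipate any genuine obstacle. The only point that requires care is to keep straight that accessibility is exactly the part of ``greedoid'' that is not supplied for free: Theorem \ref{th4} manufactures the exchange axiom out of accessibility, so in the backward direction I must invoke accessibility (and not the full greedoid property) as the hypothesis, and in the forward direction I must \emph{extract} accessibility from ``antimatroid'' rather than assume it as a separate ingredient.
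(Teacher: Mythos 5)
Your proof is correct and follows essentially the same route as the paper: Theorem \ref{th4} turns accessibility into the full greedoid property, and Lemma \ref{lem1} identifies closure under union with the uniqueness of the maximum stable set. If anything, your citation is slightly cleaner than the paper's, which invokes Theorem \ref{th3} in the forward direction where Lemma \ref{lem1} is the result actually being used.
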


\begin{proof}
If $\Psi(G)$ is an antimatroid, then $\Psi(G)$ satisfies the accessibility
property and is closed under union. By Theorem \ref{th3}, $G$ must be a unique
maximum stable set graph.

Conversely, since $\Psi(G)$ satisfies the accessibility property, Theorem
\ref{th4} ensures that $\Psi(G)$\ is a greedoid. Further, according to Lemma
\ref{lem1}, $\Psi(G)$\ is also closed under union, because $G$\ is a unique
maximum stable set graph. Consequently, $\Psi(G)$\ is an antimatroid.
\end{proof}

For instance, all the graphs from Figure \ref{Fig30} are unique maximum stable
graphs, but only $\Psi(G_{1})$ and $\Psi(G_{2})$ are antimatroids; $\Psi
(G_{3})$ is not a greedoid, since $\{x,y\}\in\Psi(G_{3})$, while
$\{x\},\{y\}\notin\Psi(G_{3})$.

\begin{figure}[h]
\setlength{\unitlength}{1.0cm} \begin{picture}(5,1.2)\thicklines
\multiput(1,0)(1,0){4}{\circle*{0.29}}
\put(2,1){\circle*{0.29}}
\put(1,0){\line(1,0){3}}
\put(2,1){\line(1,-1){1}}
\put(2,0){\line(0,1){1}}
\put(0.2,0.5){\makebox(0,0){$G_{1}$}}
\multiput(6,0)(1,0){4}{\circle*{0.29}}
\multiput(7,1)(1,0){2}{\circle*{0.29}}
\put(6,0){\line(1,0){3}}
\put(7,0){\line(0,1){1}}
\put(8,0){\line(0,1){1}}
\put(5.2,0.5){\makebox(0,0){$G_{2}$}}
\multiput(11,0)(1,0){4}{\circle*{0.29}}
\multiput(12,1)(1,0){3}{\circle*{0.29}}
\put(11,0){\line(1,0){2}}
\put(12,1){\line(1,0){2}}
\put(12,0){\line(0,1){1}}
\put(13,0){\line(0,1){1}}
\put(13,1){\line(1,-1){1}}
\put(11.7,1){\makebox(0,0){$x$}}
\put(13.3,0){\makebox(0,0){$y$}}
\put(10.2,0.5){\makebox(0,0){$G_{3}$}}
\end{picture}
\caption{$G_{1},G_{2}$ and $G_{3}$ are unique maximum stable graphs.}
\label{Fig30}
\end{figure}
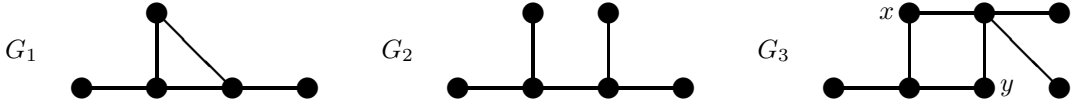

\begin{corollary}
\label{cor2}If $T$ is a tree, then the following assertions are equivalent:

\emph{(i)} $\Psi(T)$ is an antimatroid;

\emph{(ii)} $T$ is a unique maximum stable set graph;

\emph{(iii)} $T$ has a maximum stable set $S$ such that $\left\vert N(v)\cap
S\right\vert \geq2$ holds for every $v\in V(T)-S$.
\end{corollary}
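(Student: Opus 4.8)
The plan is to establish the cycle (i) $\Rightarrow$ (ii) $\Rightarrow$ (iii) $\Rightarrow$ (i), leaning on Theorem \ref{th8} and Theorem \ref{th2} to handle the bulk of the work. The implication (i) $\Rightarrow$ (ii) is immediate from Theorem \ref{th8}, since an antimatroid is in particular closed under union, so Lemma \ref{lem1} forces $\left\vert\Omega(T)\right\vert=1$. For (ii) $\Rightarrow$ (iii), I would let $S$ be the unique maximum stable set of $T$ and argue by contradiction: suppose some $v\in V(T)-S$ has $\left\vert N(v)\cap S\right\vert=1$, say $N(v)\cap S=\{s\}$. Then $(S-\{s\})\cup\{v\}$ is again a stable set of cardinality $\alpha(T)$, hence a second maximum stable set, contradicting uniqueness. (Note that every $v\notin S$ has at least one neighbor in $S$, else $S$ would not be maximal, so the only alternative to $\left\vert N(v)\cap S\right\vert\geq2$ is exactly one neighbor, which is what this swap rules out.)

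The substantive direction is (iii) $\Rightarrow$ (i). Here I would first invoke Theorem \ref{th2}: since $T$ is a forest, $\Psi(T)$ is already a greedoid, so in particular it satisfies the accessibility property. By Theorem \ref{th8}, it then suffices to show that $T$ is a unique maximum stable set graph, and the antimatroid conclusion follows. So the goal reduces to proving $\left\vert\Omega(T)\right\vert=1$ from condition (iii). I would argue that $S$ is the \emph{only} maximum stable set: take any $S'\in\Omega(T)$ and show $S'=S$. The neighborhood condition $\left\vert N(v)\cap S\right\vert\geq2$ for every $v\in V(T)-S$ is precisely the statement that $S$ cannot be "locally perturbed," and the cleanest route is to combine it with Berge's criterion (Theorem \ref{th5}) or with a direct counting/matching argument on the forest.

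The hard part will be converting the pointwise condition (iii) into global uniqueness of the maximum stable set. The most robust approach I would take is to suppose $S'\neq S$ and consider the symmetric difference, which in a forest decomposes into alternating paths and isolated vertices; since both $S$ and $S'$ are maximum, these alternating components are balanced, and one can locate an endpoint vertex $v$ of such a component lying in $S'-S$ whose unique neighbor along the path belongs to $S$, yielding a vertex outside $S$ with only one neighbor in $S$ and contradicting (iii). An alternative, perhaps shorter, formulation uses Theorem \ref{th5} directly: if $\left\vert\Omega(T)\right\vert\geq2$ then there is a nonempty stable set disjoint from $S$ that is matched into $S$ via a matching saturating it but "reversible," and condition (iii) prevents such a reversible matching from existing. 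I expect the bookkeeping of the alternating-path argument to be the only real obstacle; everything else is a direct appeal to the established theorems.
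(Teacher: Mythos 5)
Your proposal is correct, but it handles the equivalence \emph{(ii)} $\iff$ \emph{(iii)} differently from the paper: the paper simply cites the known results of G\"{u}nther--Hartnell--Rall and Zito for that equivalence, and derives \emph{(i)} $\iff$ \emph{(ii)} from Theorems \ref{th8} and \ref{th2} exactly as you do. Your swap argument for \emph{(ii)} $\Rightarrow$ \emph{(iii)} (replacing the unique $S$-neighbor $s$ of $v$ by $v$) is complete and correct. For \emph{(iii)} $\Rightarrow$ uniqueness, your reduction via Theorems \ref{th2} and \ref{th8} is the right frame, but one caveat on the sketch: the symmetric difference of two maximum \emph{stable sets} in a forest induces a forest whose components are trees, not necessarily alternating paths (that decomposition belongs to matchings), so "locate an endpoint of a path" is not quite the right picture. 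The clean way to close it is a degree count: with $A=S'-S$ and $B=S-S'$, every $v\in A$ has all its $\geq 2$ $S$-neighbors in $B$ (none can lie in $S\cap S'$ since $S'$ is stable), so $T[A\cup B]$ has at least $2\left\vert A\right\vert$ edges; being a forest on $\left\vert A\right\vert+\left\vert B\right\vert=2\left\vert A\right\vert$ vertices it has at most $2\left\vert A\right\vert-1$ edges, forcing $A=\varnothing$ and hence $S'=S$. What your route buys is a short, self-contained proof of an equivalence the paper outsources to the literature; what the paper's route buys is brevity and an explicit pointer to where the combinatorial fact originates.
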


\begin{proof}
The equivalence \emph{(i) }$\iff$ \emph{(ii) }follows from Theorems \ref{th8},
\ref{th2}.

The equivalence \emph{(ii) }$\iff$ \emph{(iii) }was proved in \cite{Gunther},
\cite{Zito}.
\end{proof}

As far as the graphs in Figure \ref{Fig232} are concerned, it is easy to check that:

\begin{itemize}
\item $\Psi(G_{1})$ is not a greedoid, because $\{u,v\}\in\Psi(G_{1})$, but
$\{a\},\{b\}\notin\Psi(G_{1})$;

\item $\Psi(G_{2})$ is a greedoid, but not a matroid, since $\{a,b\}\in
\Psi(G_{2})$, while $\{a\}\notin\Psi(G_{2})$;

\item $\Psi(G_{3})$ is a matroid.
\end{itemize}

\begin{figure}[h]
\setlength{\unitlength}{1.0cm} \begin{picture}(5,1.2)\thicklines
\multiput(1,0)(1,0){4}{\circle*{0.29}}
\multiput(2,1)(1,0){2}{\circle*{0.29}}
\put(1,0){\line(1,0){3}}
\put(2,1){\line(1,0){1}}
\put(1,0){\line(1,1){1}}
\put(2,0){\line(0,1){1}}
\put(3,0){\line(0,1){1}}
\put(3,1){\line(1,-1){1}}
\put(2.2,0.3){\makebox(0,0){$u$}}
\put(3.3,1.1){\makebox(0,0){$v$}}
\put(0.3,0.5){\makebox(0,0){$G_{1}$}}
\multiput(6,0)(1,0){4}{\circle*{0.29}}
\multiput(7,1)(1,0){2}{\circle*{0.29}}
\put(6,0){\line(1,0){3}}
\put(7,1){\line(1,0){1}}
\put(7,1){\line(1,-1){1}}
\put(7,0){\line(0,1){1}}
\put(8,0){\line(0,1){1}}
\put(6.8,0.3){\makebox(0,0){$a$}}
\put(8.3,1){\makebox(0,0){$b$}}
\put(5.3,0.5){\makebox(0,0){$G_{2}$}}
\multiput(11,0)(1,0){4}{\circle*{0.29}}
\multiput(11,1)(1,0){3}{\circle*{0.29}}
\put(11,0){\line(1,0){3}}
\put(11,0){\line(0,1){1}}
\put(11,1){\line(1,0){1}}
\put(11,1){\line(1,-1){1}}
\put(12,0){\line(0,1){1}}
\put(13,0){\line(0,1){1}}
\put(10.3,0.5){\makebox(0,0){$G_{3}$}}
\end{picture}
\caption{$G_{1},G_{2}$ and $G_{3}$ are simplicial graphs.}
\label{Fig232}
\end{figure}
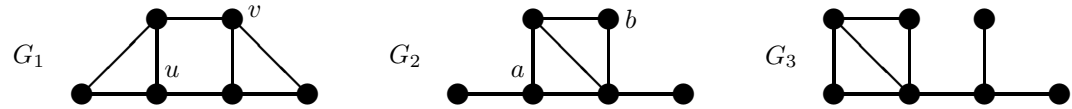

\begin{theorem}
\label{th6}If $G$ is a graph, then the following assertions are equivalent:

\emph{(i)} $\Psi(G)$ is a matroid;

\emph{(ii)} $S\subseteq\mathrm{simp}(G)$, for every $S\in$ $\Omega(G)$;

\emph{(iii)} $G$ is a simplicial graph and every non-simplicial vertex belongs
to at least two different simplices.
\end{theorem}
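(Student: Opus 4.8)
The plan is to prove full equivalence through the implications $(i)\Rightarrow(ii)$, $(ii)\Leftrightarrow(iii)$, and $(ii)\Rightarrow(i)$, organized around two pivotal observations. First, for a single vertex $\{v\}\in\Psi(G)$ holds if and only if $\alpha(G[N[v]])=1$, i.e. if and only if $N[v]$ is a clique, i.e. if and only if $v\in\mathrm{simp}(G)$. Second, every maximum stable set is itself a local maximum stable set, so $\Omega(G)\subseteq\Psi(G)$: a stable set of $G[N[S]]$ larger than $S$ would be a stable set of $G$ larger than $S$, contradicting $S\in\Omega(G)$. I would also record two structural facts about simplicial vertices, to be reused: a simplicial vertex $w$ lies in the unique simplex $N[w]$, and two simplicial vertices lying in distinct simplices are non-adjacent (an edge between them would put each into the other's unique simplex, forcing the two simplices to coincide).

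For $(i)\Rightarrow(ii)$: if $\Psi(G)$ is a matroid it is hereditary, so for any $S\in\Omega(G)\subseteq\Psi(G)$ and any $v\in S$ we get $\{v\}\in\Psi(G)$; by the first observation this forces $v\in\mathrm{simp}(G)$, whence $S\subseteq\mathrm{simp}(G)$. For $(iii)\Rightarrow(ii)$: Theorem~\ref{th7} furnishes simplices $Q_1,\dots,Q_s$ with $s=\alpha(G)=\theta(G)$ covering $V(G)$, and a short counting argument ($|S\cap Q_i|\le 1$ for each clique $Q_i$, while $|S|=s$ and the $Q_i$ cover $V(G)$) shows that for every $S\in\Omega(G)$ the sets $S\cap Q_i$ are pairwise disjoint singletons. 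Hence a vertex belonging to two simplices cannot lie in any maximum stable set, so under $(iii)$ every $S\in\Omega(G)$ avoids the non-simplicial vertices, which is exactly $(ii)$.

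For the converse $(ii)\Rightarrow(iii)$: taking any maximum (hence maximal) stable set $S\subseteq\mathrm{simp}(G)$, every vertex outside $S$ has a neighbour in $S$, which is simplicial, so $G$ is simplicial; in particular every non-simplicial vertex lies in at least one simplex. It remains to exclude a non-simplicial $v$ lying in a single simplex $Q_1$. Here I would pick one simplicial representative $u_i\in Q_i$ from each remaining simplex $Q_2,\dots,Q_s$. By the non-adjacency fact these $u_i$ are pairwise non-adjacent and distinct, giving a stable set of size $s-1$; moreover, since $v$ meets only $Q_1$, no $u_i$ ($i\ge 2$) can be adjacent to $v$ (else $v\in N[u_i]=Q_i$ with $i\ge2$). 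Thus $\{v,u_2,\dots,u_s\}$ is a stable set of size $\alpha(G)$ containing the non-simplicial vertex $v$, contradicting $(ii)$; so every non-simplicial vertex lies in at least two simplices.

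Finally, for $(ii)\Rightarrow(i)$: under $(ii)$, combining $\Omega(G)\subseteq\Psi(G)$, Theorem~\ref{th1}, and the already-noted fact that every stable subset of $\mathrm{simp}(G)$ belongs to $\Psi(G)$, one obtains $\Psi(G)=\{S\subseteq\mathrm{simp}(G):S\text{ stable}\}$. The non-adjacency fact shows $G[\mathrm{simp}(G)]$ is a disjoint union of cliques, one per simplex, so this family is precisely the partition (transversal) matroid that selects at most one vertex from each simplex; it is therefore hereditary and satisfies the exchange axiom, i.e. it is a matroid. The step I expect to be the main obstacle is the construction in $(ii)\Rightarrow(iii)$: producing an explicit maximum stable set through a non-simplicial vertex hinges on the two uniqueness/non-adjacency facts about simplicial vertices together with the disjoint-transversal structure of Theorem~\ref{th7}, and the delicate point is arranging the representatives $u_i$ to be simultaneously stable and disjoint from $N[v]$.
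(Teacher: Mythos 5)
Your proof is correct, and while it follows the same overall skeleton as the paper (\emph{(i)}$\Rightarrow$\emph{(ii)}, \emph{(ii)}$\Leftrightarrow$\emph{(iii)}, \emph{(ii)}$\Rightarrow$\emph{(i)}, with an identical argument for \emph{(i)}$\Rightarrow$\emph{(ii)}), the remaining three implications are argued by genuinely different means. For \emph{(iii)}$\Rightarrow$\emph{(ii)} the paper takes a non-simplicial $v\in S$ lying in two simplices $Q_i,Q_j$ and exhibits the strictly larger stable set $(S\cup\{v_i,v_j\})-\{v\}$, whereas you extract from Theorem \ref{th7} the cleaner structural fact that the traces $S\cap Q_i$ of any $S\in\Omega(G)$ form $s$ pairwise disjoint singletons, so no vertex of $S$ can lie in two simplices; this avoids any adjacency verification. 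For the second half of \emph{(ii)}$\Rightarrow$\emph{(iii)} the paper perturbs an existing $S\in\Omega(G)$ by the swap $(S\cup\{v\})-\{w\}$ (leaving implicit the key point that every simplicial neighbour of $v$ lies in $v$'s unique simplex $Q$), while you build a maximum stable set through $v$ from scratch out of one simplicial representative per remaining simplex; both hinge on the same two facts you isolate up front (a simplicial vertex's unique simplex is $N[w]$, and simplicial vertices of distinct simplices are non-adjacent), and stating them explicitly is a gain in rigour over the paper's terser version. Finally, for \emph{(ii)}$\Rightarrow$\emph{(i)} the paper only verifies heredity and outsources the exchange property to Theorem \ref{th4}, whereas you identify $\Psi(G)$ outright as $\{S\subseteq\mathrm{simp}(G):S\ \text{stable}\}$, i.e.\ the partition matroid with blocks $\mathrm{simp}(G)\cap Q_i$; this is self-contained, more informative (it answers, for this class, the paper's closing open problem of describing the matroids produced by $\Psi(G)$), at the modest cost of not reusing the machinery of Theorem \ref{th4}.
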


\begin{proof}
\emph{(i)} $\Rightarrow$\emph{(ii)} Suppose that $\Psi(G)$ is a matroid. Any
$S\in\Omega(G)$ belongs also to $\Psi(G)$, and therefore, by hereditary
property, it follows that $\{x\}\in\Psi(G)$, for every $x\in S$. Hence,
$\alpha(G[N[x]])=\left\vert \{x\}\right\vert =1$, and this ensures that $N[x]$
is a clique. Consequently, we infer that $x\in\mathrm{simp}(G)$, for each
$x\in S$. Therefore, $S\subseteq\mathrm{simp}(G)$, for every $S\in$ $\Omega(G)
$.

\emph{(ii)} $\Rightarrow$ \emph{(i) }According to Theorem \ref{th4}, it is
sufficient to show that $\Psi(G)$ has hereditary property.

Let now $S_{1}\in\Psi(G)$ and $S_{2}\subset S_{1}$. By Theorem \ref{th1},
there is some $S\in\Omega(G)$ such that $S_{1}\subset S$. Hence,
$S_{2}\subseteq\mathrm{simp}(G)$, which clearly implies that $S_{2}\in\Psi(G)$.

\emph{(ii)} $\Rightarrow$ \emph{(iii)} Suppose that $G$ is not simplicial.
Then there is at least one vertex $v\in V(G)$ such that $N[v]\cap
\mathrm{simp}(G)=\varnothing$. For each $S\in\Omega(G)$ we have $S\subseteq
\mathrm{simp}(G)$, and this implies that $S\cap N[v]$ $=\emptyset$. Hence,
$S\cup\{v\}$ is stable in $G$, in contradiction with the choice $S\in
\Omega(G)$. Therefore, $G$ is a simplicial graph.

Assume that there exists a vertex $v\in V(G)-\mathrm{simp}(G)$ such that $v$
belongs to a unique simplex, say $Q$, and let $S\in\Omega(G)$. Since
$S\subseteq\mathrm{simp}(G)$ and $v\notin\mathrm{simp}(G)$, it follows that
$S\cap Q=\{w\}\neq\{v\}$. Hence, we get that $(S\cup\{v\})-\{w\}\in\Omega(G)$,
and consequently, $(S\cup\{v\})-\{w\}\subseteq\mathrm{simp}(G)$, contradicting
the assumption that $v\notin\mathrm{simp}(G)$.

So, we may conclude that $G$ is a simplicial graph and every non-simplicial
vertex belongs to at least two different simplices.

\emph{(iii)} $\Rightarrow$ \emph{(ii)} According to Theorem \ref{th7},
\[
V(G)=V(Q_{1})\cup V(Q_{2})\cup...\cup V(Q_{s}),
\]
where $Q_{1},...,Q_{s}$ are the simplices of $G$ and $s=\theta(G)=\alpha(G)$.
Suppose that there is some $S\in\Omega(G)$ such that $S\nsubseteq
\mathrm{simp}(G)$. Let $v\in S-\mathrm{simp}(G)$ and $Q_{i},Q_{j}$ be two
different simplices of $G$, both containing $v$. Since $v\in S$ and
$Q_{i},Q_{j}$ are cliques in $G$, it follows that $S\cap Q_{i}=\{v\}=S\cap
Q_{j}$. Let $v_{i}\in Q_{i}\cap\mathrm{simp}(G)$ and $v_{j}\in Q_{j}%
\cap\mathrm{simp}(G)$ be non-adjacent vertices in $G$. Then, the set
$(S\cup\{v_{i},v_{j}\})-\{v\}$ is stable in $G$ and larger than $S$, in
contradiction with $S\in\Omega(G)$. Therefore, $S\subseteq\mathrm{simp}(G)$
must hold for each $S\in\Omega(G)$, and this completes the proof.
\end{proof}

\begin{corollary}
\label{cor1}If $G$ is a triangle-free graph, then the following statements are equivalent:

\emph{(i)} $\Psi(G)$ is a matroid;

\emph{(ii)} $S\subseteq\mathrm{pend}(G)\cup\mathrm{isol}(G)$, for every $S\in$
$\Omega(G)$;

\emph{(iii)} $G$ has as connected components: $K_{1},K_{2}$, and graphs having unique maximum stable sets, namely, sets of their pendant vertices.
\end{corollary}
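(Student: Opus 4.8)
The plan is to reduce Corollary \ref{cor1} to Theorem \ref{th6} by specializing the notion of simplicial vertex to the triangle-free setting. First I would observe that in a triangle-free graph a vertex $v$ is simplicial if and only if $N[v]$ is a clique, which (since no triangle exists) forces $\left\vert N[v]\right\vert \leq 2$; hence $\mathrm{simp}(G)=\mathrm{pend}(G)\cup\mathrm{isol}(G)$. With this identification, the equivalence \emph{(i)} $\iff$ \emph{(ii)} is immediate: Theorem \ref{th6} gives that $\Psi(G)$ is a matroid iff $S\subseteq\mathrm{simp}(G)$ for every $S\in\Omega(G)$, and the displayed set equality rewrites $\mathrm{simp}(G)$ as $\mathrm{pend}(G)\cup\mathrm{isol}(G)$.

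The substantive work is the equivalence \emph{(ii)} $\iff$ \emph{(iii)}, which I would prove by passing to connected components. Since $\Omega(G)$ is the ``product'' of the $\Omega$'s of the components (a maximum stable set of $G$ restricts to a maximum stable set on each component, and conversely), the condition $S\subseteq\mathrm{pend}(G)\cup\mathrm{isol}(G)$ for all $S\in\Omega(G)$ holds iff the analogous condition holds on every connected component. So I would fix a connected triangle-free graph $H$ and characterize when every maximum stable set of $H$ consists solely of pendant or isolated vertices. The degenerate cases $H=K_{1}$ (a single isolated vertex) and $H=K_{2}$ (two mutually pendant vertices) satisfy the condition trivially. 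For a connected $H$ with at least three vertices and at least one edge, an isolated vertex cannot occur, so the requirement becomes $S\subseteq\mathrm{pend}(H)$ for every $S\in\Omega(H)$.

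The heart of the argument is then to show that a connected triangle-free graph $H$ on $\geq 3$ vertices satisfies ``$S\subseteq\mathrm{pend}(H)$ for all $S\in\Omega(H)$'' precisely when $H$ has a unique maximum stable set, which then necessarily equals $\mathrm{pend}(H)$. For the forward direction I would argue that if all maximum stable sets lie inside $\mathrm{pend}(H)$, then every such $S$ is contained in the fixed vertex set $\mathrm{pend}(H)$; if two distinct maximum stable sets $S_{1}\neq S_{2}$ both sat inside $\mathrm{pend}(H)$, I would exploit that distinct pendant vertices attached to a common support vertex force adjacencies that let me recombine $S_1,S_2$ into a larger stable set or derive a non-pendant member, contradicting maximality or the pendant hypothesis; this pins down uniqueness and forces $S=\mathrm{pend}(H)$. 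For the converse, if $H$ has a unique maximum stable set $S=\mathrm{pend}(H)$, then trivially every $S\in\Omega(H)$ is a set of pendant vertices. I expect the main obstacle to be this last equivalence on a single component: one must rule out the possibility that a connected triangle-free graph has all its maximum stable sets pendant yet admits two distinct such sets, and carefully handle support vertices carrying several pendant neighbors. The cleanest route is probably to invoke the characterization of unique maximum stable set graphs already recorded in Corollary \ref{cor2}\emph{(iii)} (the condition $\left\vert N(v)\cap S\right\vert\geq 2$ for $v\notin S$) specialized to each component, so that the pendant-structure claim follows from the already-established tree/connected analysis rather than being re-derived from scratch.
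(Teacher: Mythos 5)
Your reduction is the same as the paper's: the entire published proof consists of the observation that $\mathrm{simp}(G)=\mathrm{pend}(G)\cup\mathrm{isol}(G)$ for triangle-free $G$ followed by an appeal to Theorem \ref{th6}, so your treatment of \emph{(i)} $\iff$ \emph{(ii)} matches it exactly, and your component-by-component analysis of \emph{(ii)} $\iff$ \emph{(iii)} is a reasonable way of filling in what the paper leaves implicit (the paper instead lets this follow from translating Theorem \ref{th6}\emph{(iii)} into the triangle-free setting). The one place your sketch wobbles is the step you call the heart of the argument. The claimed difficulty is illusory: if $H$ is a connected component with at least three vertices, no two pendant vertices of $H$ are adjacent, so $\mathrm{pend}(H)$ is itself a stable set; hence if some $S\in\Omega(H)$ satisfies $S\subseteq\mathrm{pend}(H)$, then $|\mathrm{pend}(H)|\leq\alpha(H)=|S|\leq|\mathrm{pend}(H)|$ forces $S=\mathrm{pend}(H)$, which gives uniqueness and the description of the unique maximum stable set in one line. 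The ``recombination'' argument you gesture at (adjacencies forced by distinct pendant vertices on a common support) is both unnecessary and, as stated, does not go through, and your proposed fallback of citing Corollary \ref{cor2}\emph{(iii)} is inapplicable, since that corollary is stated only for trees while a connected triangle-free component may contain even cycles. With the one-line replacement above your proof is complete and agrees in substance with the paper's.
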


\begin{proof}
Now,\textbf{\ }$\mathrm{simp}(G)=\mathrm{pend}(G)\cup\mathrm{isol}(G)$, since
$G$ is a triangle-free graph. Further, the proof follows from Theorem
\ref{th6}.
\end{proof}

Since bipartite graphs are triangle-free, Corollary \ref{cor1} is true for
bipartite graphs, as well. It is easy to see that $\Psi(K_{1})$ and
$\Psi(K_{2})$ are matroids. For trees with more than three vertices, we have the following result.

\begin{corollary}
If $T$ is a tree of order at least three, then the following assertions are equivalent:

\emph{(i)} $\Psi(T)$ is a matroid;

\emph{(ii)} $\mathrm{pend}(T)$ is the unique maximum stable set of $T$;

\emph{(iii)} $\Psi(T)$ is a trimmed matroid.
\end{corollary}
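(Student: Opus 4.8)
My plan is to route the argument through condition (ii), proving $\mathrm{(i)}\iff\mathrm{(ii)}$ and $\mathrm{(i)}\iff\mathrm{(iii)}$ separately; condition (ii) will supply the concrete description of $\Psi(T)$ that makes (iii) transparent. Throughout I rely on the fact that $T$, being a tree, is triangle-free, so that Corollary \ref{cor1} and the simplicial characterisation of Theorem \ref{th6} apply.

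For $\mathrm{(i)}\iff\mathrm{(ii)}$ I would specialise Corollary \ref{cor1} to $T$. Since $T$ has order at least three it is connected with at least one edge, so $\mathrm{isol}(T)=\varnothing$ and hence $\mathrm{simp}(T)=\mathrm{pend}(T)$. Moreover two pendant vertices of such a tree cannot be adjacent (an edge joining two degree-one vertices would be an isolated $K_{2}$ component, impossible in a connected graph on more than two vertices), so $\mathrm{pend}(T)$ is itself a stable set. Corollary \ref{cor1} then reads: $\Psi(T)$ is a matroid iff every $S\in\Omega(T)$ satisfies $S\subseteq\mathrm{pend}(T)$. Because $\mathrm{pend}(T)$ is stable, any maximum stable set contained in it must equal it (equal cardinalities), so ``every $S\in\Omega(T)$ lies in $\mathrm{pend}(T)$'' is the same as ``$\mathrm{pend}(T)$ is the unique maximum stable set'', which is (ii). This step is routine bookkeeping.

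For $\mathrm{(i)}\Rightarrow\mathrm{(iii)}$ I would first upgrade (ii) to an explicit formula for the whole family. By Theorem \ref{th1} every $A\in\Psi(T)$ lies in a maximum stable set, which under (ii) is forced to be $\mathrm{pend}(T)$, so $A\subseteq\mathrm{pend}(T)$; conversely every subset of $\mathrm{pend}(T)=\mathrm{simp}(T)$ is a stable set of simplicial vertices and hence belongs to $\Psi(T)$. Thus $\Psi(T)=2^{\mathrm{pend}(T)}$, the free matroid on the pendant vertices (the remaining vertices being loops). This family is at once hereditary and closed under union, so it is \emph{simultaneously} a matroid and an antimatroid; writing it as its own intersection exhibits it as a trimmed matroid, giving (iii). (Equivalently, (ii) makes $T$ a unique maximum stable set graph, so Corollary \ref{cor2} already supplies the antimatroid half.)

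The delicate implication is $\mathrm{(iii)}\Rightarrow\mathrm{(i)}$, and I expect this to be the main obstacle. The always-available defect when (ii) fails is a failure of the hereditary property: some $S\in\Omega(T)$ then contains a non-pendant (hence non-simplicial) vertex $c$, for which $N[c]$ is not a clique, so $\{c\}\notin\Psi(T)$ although $c\in S\in\Psi(T)$; this already shows $\Psi(T)$ is not a matroid. The difficulty is passing from ``not a matroid'' to ``not a trimmed matroid'': being cut out of a matroid $M$ by an antimatroid $AM$ need not restore heredity ---indeed the excerpt's own $\Psi(P_{4})$ is a genuine intersection $M\cap AM$ that is neither a matroid nor an antimatroid, and $\Psi(P_{5})$ is an antimatroid that is not a matroid, both on trees of order at least three. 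I would therefore read (iii) in the strong sense that $\Psi(T)$ is simultaneously a matroid and an antimatroid (the case $M=AM$), which forces $\Psi(T)=2^{\mathrm{pend}(T)}$ and makes $\mathrm{(iii)}\Rightarrow\mathrm{(i)}$ immediate; securing the literal ``intersection of a matroid and an antimatroid'' formulation for every such tree is exactly the point I would have to argue most carefully, and I expect it to require this stronger interpretation.
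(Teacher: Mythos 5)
Your handling of \emph{(i)}$\iff$\emph{(ii)} and of \emph{(ii)}$\Rightarrow$\emph{(iii)} is correct and essentially coincides with the paper's: the paper obtains \emph{(i)}$\iff$\emph{(ii)} by specialising Corollary \ref{cor1} (your bookkeeping that $\mathrm{isol}(T)=\varnothing$, that $\mathrm{pend}(T)$ is stable, and hence that containment of every maximum stable set in $\mathrm{pend}(T)$ forces uniqueness, is exactly what that specialisation requires), and it obtains the antimatroid half of \emph{(iii)} from Corollary \ref{cor2}, just as you note parenthetically; your explicit identification $\Psi(T)=2^{\mathrm{pend}(T)}$ is a slightly more concrete way of packaging the same facts.

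The interesting part is \emph{(iii)}$\Rightarrow$\emph{(i)}, and here you have put your finger on a genuine defect --- not in your attempt, but in the statement and in the paper's proof. The paper disposes of this direction with the single sentence ``Clearly, \emph{(iii)} implies \emph{(i)},'' offering no argument. As you observe, under the paper's own definition of a trimmed matroid (the intersection of a matroid and an antimatroid) this implication is contradicted by the paper's own Section 2 example: $\Psi(P_{4})=M\cap AM$ is exhibited there as a trimmed matroid, $P_{4}$ is a tree of order four, and $\Psi(P_{4})$ is explicitly shown not to be a matroid (since $\{a,c\}\in\Psi(P_{4})$ while $\{c\}\notin\Psi(P_{4})$) and $\mathrm{pend}(P_{4})=\{a,d\}$ is not the unique maximum stable set. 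So the corollary is false as literally stated, and no proof of \emph{(iii)}$\Rightarrow$\emph{(i)} can exist without modifying \emph{(iii)}. Your proposed repair --- reading \emph{(iii)} as ``$\Psi(T)$ is simultaneously a matroid and an antimatroid'' --- does restore the equivalence (under \emph{(ii)} one gets $\Psi(T)=2^{\mathrm{pend}(T)}$, which is both, and conversely ``both'' trivially gives \emph{(i)}), but you are right that this is a strictly stronger condition than membership in the class of trimmed matroids, so it amounts to correcting the statement rather than proving it. In short: where your proof is complete it matches the paper, and where it stops it does so because it has correctly diagnosed an error that the paper's one-line argument conceals.
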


\begin{proof}
Corollary \ref{cor1} assures that "\emph{(i) }$\iff$ \emph{(ii)}" is valid.
Further, using Corollary \ref{cor2}, it follows that "\emph{(ii)} $\implies$
\emph{(iii)}" is also true. Clearly, \emph{(iii)} implies \emph{(i)}.
\end{proof}

If $T$ is a tree having a unique maximum stable set, then $\Psi(T)$ is a
greedoid, but is not necessarily a local poset greedoid; e.g., the tree in
Figure \ref{fig23}.

\begin{proposition}
\label{prop2}If every $S\in\Omega(G)$ is contained in $\mathrm{simp}(G)$, then
$\Psi(G)$ is a local poset greedoid.
\end{proposition}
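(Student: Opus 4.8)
The plan is to observe that the hypothesis is precisely condition \emph{(ii)} of Theorem \ref{th6}, and therefore to invoke that theorem to conclude that $\Psi(G)$ is a matroid. This immediately supplies both the greedoid structure and the hereditary property, after which only the local poset condition needs to be checked.

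First I would apply Theorem \ref{th6}: since $S\subseteq\mathrm{simp}(G)$ holds for every $S\in\Omega(G)$, the equivalence \emph{(ii)} $\iff$ \emph{(i)} yields that $\Psi(G)$ is a matroid. In particular, $\Psi(G)$ is a greedoid enjoying the hereditary property, i.e., every subset of a member of $\Psi(G)$ again belongs to $\Psi(G)$.

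Next I would verify the defining property of a local poset greedoid. Let $X,Y,Z\in\Psi(G)$ with $X,Y\subset Z$. Since $X\subseteq Z$ and $Y\subseteq Z$, we have both $X\cup Y\subseteq Z$ and $X\cap Y\subseteq X\subseteq Z$. As $Z\in\Psi(G)$, the hereditary property forces $X\cup Y\in\Psi(G)$ and $X\cap Y\in\Psi(G)$. Hence $\Psi(G)$ satisfies the local poset condition, completing the argument.

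I anticipate no real obstacle here: the entire argument rests on the elementary observation that in any matroid both $X\cup Y$ and $X\cap Y$ lie inside $Z$, so the hereditary property alone places them in the family. Thus the proof in effect establishes the stronger fact that every matroid is a local poset greedoid, and all the genuine content of the proposition is carried by Theorem \ref{th6}, which identifies the hypothesis with the matroid property.
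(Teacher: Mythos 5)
Your proposal is correct, and it leans on the same key ingredient as the paper, namely Theorem \ref{th6}. The difference is in how the local poset condition is then verified. The paper only extracts from Theorem \ref{th6} that $\Psi(G)$ is a greedoid, and then checks the condition graph-theoretically: since $Z\in\Psi(G)$ lies in some $S\in\Omega(G)\subseteq\mathrm{simp}(G)$ (via Theorem \ref{th1}), both $X\cup Y$ and $X\cap Y$ are stable subsets of $\mathrm{simp}(G)$ and hence belong to $\Psi(G)$. You instead extract the full matroid conclusion and finish purely formally: $X\cup Y$ and $X\cap Y$ are subsets of $Z\in\Psi(G)$, so the hereditary property places them in $\Psi(G)$. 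Your route isolates the clean abstract fact that every matroid (in the paper's sense, i.e., a hereditary greedoid) is a local poset greedoid, which makes the proposition an immediate corollary of Theorem \ref{th6} and arguably sharper in exposition; the paper's version re-uses the simplicial-containment observation that also underlies the proof of Theorem \ref{th6}, so the two arguments are ultimately powered by the same fact. Both are valid.
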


\begin{proof}
First, $\Psi(G)$ is a greedoid, by Theorem \ref{th6}. Further, let us notice
that if a stable set $S$ is contained in $\mathrm{simp}(G)$, then $S$ belongs
to $\Psi(G)$. Therefore, for any $X,Y,Z\in\Psi(G)$ satisfying $X\subset
Z,Y\subset Z$, it follows that $X\cup Y,X\cap Y\in\Psi(G)$. Hence, $\Psi(G)$
is a local poset greedoid.
\end{proof}

Let us notice that the converse of Proposition \ref{prop2} is not true. For
instance, $\Psi(P_{4})$ is a local poset greedoid, and, clearly, there exists $S\in
\Omega(P_{4})$, which is not contained in $\mathrm{simp}(P_{4})$.

\section{Conclusions}

In this paper we have proved that in the case of the family $\Psi(G)$,
the accessibility property implies the exchange property, and the resulting
greedoids form a proper subfamily of the class of interval greedoids. The graphs,
whose families of local maximum stable sets are either antimatroids or matroids, have
been described completely.

\textbf{Open problem}: characterize the interval greedoids, the matroids, and
the antimatroids produced by $\Psi(G)$.

\end{document}